\newtheorem{thm}{Theorem}[section]
\newtheorem{thm*}{Theorem*} 
\newtheorem{cor}[thm]{Corollary}
\newtheorem{lem}[thm]{Lemma}
\theoremstyle{definition}
\DeclareMathOperator{\esssup}{ess\,sup}
\numberwithin{equation}{section} \theoremstyle{remark}
\newtheorem{rem}{Remark}[section]
\begin{document}

\title{ Some effects of the noise intensity upon non-linear stochastic heat equations on $[0,1]$}
\author{ Bin Xie\footnote{ E-mail:
 bxie@shinshu-u.ac.jp; bxieuniv@outlook.com}\\
{ \small{ Department of Mathematical Sciences, Faculty of
Science, Shinshu University }}\\
{ \small{3-1-1 Asahi, Matsumoto, Nagano
390-8621, Japan } } }
\date{
}
\maketitle


\begin{abstract}
Various  effects of the noise intensity upon the solution $u(t,x)$ of the stochastic heat equation 
with Dirichlet boundary conditions on $[0,1]$ are  investigated.  We show that for small noise intensity, the $p$-th moment of $\sup_{x\in [0,1]} |u(t,x)|$ is exponentially stable, however, for  large one, it  grows  at least exponentially. We also prove that the noise excitation of the $p$-th energy of $u(t,x)$ is $4$, as the noise intensity goes to infinity.  We formulate a common method to investigate the lower bounds of the above two  different behaviors for large  noise intensity, which are  hard parts in   \cite{FoJo-14}, \cite{FoNu} and \cite{KhKi-15}.  \end{abstract}


\textbf{ Keywords:} stochastic heat equation,  Dirichelt  boundary condition,
space-time white noise, excitation index, exponential stability, growth rate\\

\textit{2010 Mathematics Subject Classification:} Primary 60H15, 60H25;
Secondary 35R60, 60K37.

\section{Introduction and main results}
We are interested in various behaviors of  the following stochastic heat equation relative to $\lambda$:
\begin{align}\label{Spde-1.1}
\begin{cases}
\partial_tu(t, x)&=\frac12 \Delta u(t,x) + \lambda\sigma(u(t,x))\dot{w}(t,x), \ t>0, \ x\in (0, 1),\\
u(0,x)& =u_0(x),\   x\in(0, 1),
\end{cases}
\end{align}
where $\lambda>0$ is a positive number, $\sigma$ is a non-random measurable function defined on $\mathbb{R}$ and $\dot{w}(t,x)$ is a Gaussian space-time noise on $[0, \infty) \times [0, 1]$. Such equation is closely connected to the parabolic Anderson model (as $\sigma(u)=u$, see \cite{CaMo-94 }),  the stochastic Burger's equation\cite{BeGi-97, GyNu-99} and the Kardar-Parisi-Zhang (KPZ) equation \cite{BeGi-97, FuQu-14, Ha-13}. Hence some crucial properties , such as the weak intermittency of the solution, are actively studied, see \cite{ChDa-14}, \cite{FoKh-09}, \cite {KhKi-15-1} and references therein.

 In this paper, we are mainly interested  in \eqref{Spde-1.1} with homogeneous Dirichlet boundary  condition, i.e., $u(t,0)=u(t,1)=0$. Some of our results will also hold for \eqref{Spde-1.1} with  homogeneous Neumann boundary condition $\partial_tu(t,0)=\partial_tu(t,1)=0$ and we will state them in form of remarks.

According to \cite{FoNu} and  \cite{KhKi-15}, the parameter $\lambda>0$  in \eqref{Spde-1.1} will be called the level of noise or noise intensity, which is regarded as the inverse temperature.
The solution $u(t,x)$ can be though of the partition function of a continuous space-time random polymer, see \cite{SeSa-08} for more explanations.

In this paper, two kinds of the behaviors of the solution relative to noise intensity $ \lambda$ will be studied. To explain our aims and motivations in detail, let us first introduce some notation and the definition of the solution \eqref{Spde-1.1}.
Let $\{\mathcal{F}_t \}_{t\geq 0}$ denote the filtration generated by the $\{w(t,x); t \geq 0, x\in [0,1]\}$, see \cite{Wal}. 
In this paper, we will always assume the following assumption {\bf
A} is satisfied:\\
$(A.1)$ The initial value $u_0$ is non-random and continuous on $[0, 1]$. Furthermore, we assume that the Lebesgue measure of the set ${\it{supp}}(u_0)\cap  [\gamma, 1-\gamma]$ is strictly positive, 
where ${\it supp}(u_0)$ denotes the support of $u_0$ and  $\gamma \in (0, 1/4)$ is fixed hereafter.\\
$(A.2)$ $\sigma(0)=0$ and $\sigma$ is Lipschitz continuous, that is, there exists $K_U>0$ such that for all $u, v\in \mathbb{R}$,
$$|\sigma(u) -\sigma(v)| \leq K_U|u-v|.$$
 Let us recall the definition of the solution to \eqref{Spde-1.1}. Based on the definition introduced in \cite{Wal}, a random field $\{u_\lambda(t,x); t\geq 0, x\in [0, 1]\}$ is said to be a mild solution of \eqref{Spde-1.1} with the homogeneous Dirichlet boundary condition if it is $\mathcal{F}_t$-adapted and continuous in $(t,x)$, and further it satsifies the following integral equation  with probability one 
\begin{align}\label{eq-2.3-1}
u(t,x)&= \int_0^1 g_D(t,x,y)u_0(y)dy + \int_0^t\int_0^1 g_D(t-s,x,y)\lambda \sigma(u(s,y))w(dsdy)\\
&:= D_{1}(t,x) +D_{2, \lambda}(t,x), \notag
\end{align}
where $g_D(t,x,y)$ denotes the fundamental solution (or heat kernel) of the linear part of the stochastic heat equation \eqref{Spde-1.1} with Dirichlet boundary condition $u(t, 0)=u(t,1)=0$.
Similarly, an $\mathcal{F}_t$-adapted and continuous random field $\{u_\lambda(t,x); t\geq 0, x\in [0, 1]\}$ is said to be a mild solution of \eqref{Spde-1.1} with  homogeneous Neumann boundary condition if \eqref{eq-2.3-1} is satisfied almost surely replaced $g_D(t,x,y)$ by the Neumann kernel $g_N(t,x,y)$. 
For the introduction to stochastic partial differential equations, we also refer the reader to \cite{DaKh-09} for more information.

 Since our topics are closely depending on the noise intensity $\lambda$, we will denote by $u_\lambda(t,x)$ the solution of \eqref{Spde-1.1} with homogeneous Dirichlet boundary condition. Let $p\geq 2$ in this paper and then any real valued measurable function $u$ defined on $[0, 1]$, let $\|u\|_{L^p}$ denote its $L^p$-norm on $[0,1]$. Recalling that for $p=\infty$, 
$\|u\|_{L^\infty}=\esssup_{x\in [0, 1]}|u(x)|$.
 
One of our main aims  is to study the exponential stability of the solution for fixed $\lambda$, which is widely studied, because of its importance in applications. One of the important and hard problem for stability is to calculate the Lyapunov exponents. For stochastic parabolic partial differential equations driven by a finite dimensional Gaussian noise, we refer the reader to \cite{Kw-99} and \cite{Xie-08}. However, it seems very hard for \eqref{Spde-1.1}, see \cite{JoKhMu-14}.
Recently, in \cite{FoNu}, the authors proved that if the level of the noise is small, then the $p$-th absolute  moment of $u_\lambda(t,x)$ is exponentially stable, however, for large enough $\lambda$, the $p$-th absolute moment of $u_\lambda(t,x)$ becomes unstable and grows at least exponentially. 
We generalized the main results in \cite{FoNu} in two ways. One is to show the exponential stability of the $p$-th moment of  $\|u(t)\|_{L^\infty}$, instead of $|u_\lambda(t,x)|$, and the other is that an innovative  method to show the lower bound of the growth rate of the solution for large, but fixed $\lambda$, see Theorem \ref{thm-3.1-0608} below.

Our method can also be applied to study  the excitability of the noise as $\lambda \to \infty$ for each $t>0$, which is our second main topic.
The non-linear noise excitability of stochastic heat equations is  initially introduced in \cite{KhKi-15} and restudied in \cite{FoJo-14}, see  also \cite{KhKi-15-1} for other research.  To study this kind of problem, in \cite{KhKi-15} the authors implemented  a projection method to prove that as $\lambda \to \infty,$ the $L^2$-energy of the solution grows at least as $\exp(\kappa_1 \lambda^2)$, and at most as $\exp(\kappa_2 \lambda^4)$. 
But the authors predicted that the lower bound may be $\exp(\kappa_1 \lambda^4)$, instead of $\exp(\kappa_1 \lambda^2)$, that is, the noise excitation may be equal to $4$, same as that for the large number of intermittent complex systems \cite{KhKi-15-1}. To fill this gap, a renewal approach is introduced in \cite{FoJo-14}, which is essentially depends on the short time estimate of the heat kernel.  
They first proved that the noise excitation is $4$ for small time, and then extended it to each fixed $t>0$. However, their method can not be applied to study the larger time behavior as $t\to \infty$ for fixed $\lambda$, which is the first goal of this paper. 
Our main motivation is of this observation and a method is introduced to study both kinds of behaviors above. It seems that our technique can be  applied to (fractional) stochastic heat equations on general bounded domain of $\mathbb{R}^d$ driven by white or colored noises, which is currently being considered.  

From now, we will state our main results. The first is going to show that the $p$-th moment of $\|u_{\lambda}(t)\|_{L^\infty}$ grows exponentially fast at $t\to \infty$. 

For each  $\beta \in \mathbb{R}$ and $p \geq 2$, 
let us  denote by $B_{p, \beta}$ the class of all the $\mathcal{F}_t$-adapted and continuous random field 
$\{u(t,x ), t\geq 0, x\in [0, 1]\}$  satisfying 
\begin{align*}
 \sup_{t\geq 0}
\mathbb{E}[e^{\beta t}\|u_\lambda(t)\|_{L^\infty}^p] <\infty.
\end{align*}
 For each  $u\in B_{p, \beta}$, we set
\begin{align}\label{eq-1.3-0613}
\|u_\lambda\|_{p, \beta}= \Big(\sup_{t\geq 0}
\mathbb{E}[e^{\beta t}\|u_\lambda(t)\|_{L^\infty}^p] \Big)^{\frac1{p}}.
\end{align}
Then it is easy to know that $(B_{p, \beta}, \|u_\lambda\|_{p, \beta})$ is a Banach space.

Let us now formulate the first main result of this paper, which is about the existence and uniqueness of the solution in the Banach spcace  $B_{p, \beta}, \beta<0$ and then gives a upper bound of the growth rate for any $\lambda >0$.
\begin{thm}\label{thm-1.1-0531}
Let $p>2$. Then there exists $\beta_0<0$ such that for any $\beta< \beta_0$,  the equation  \eqref{Spde-1.1} has a unique mild solution $u_\lambda(t,\cdot) \in B_{p, \beta}$.

In particular,  the growth of the solution in time $t$ is at most in a exponential rate in the $p$-moment sense. Precisely speaking, for any $\beta< \beta_0$
\begin{align*}
\limsup_{t\to  \infty}\frac1t\log\mathbb{E}[\|u_\lambda (t)\|_{L^\infty}^p] \leq -\beta.
\end{align*}
\end{thm}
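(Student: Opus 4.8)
The plan is to run a standard Picard-type fixed-point argument in the Banach space $(B_{p,\beta}, \|\cdot\|_{p,\beta})$, the key point being to choose $\beta_0<0$ with $|\beta_0|$ large enough that the stochastic-convolution operator becomes a contraction. First I would set up the solution map $\Phi(u)(t,x) = D_1(t,x) + \int_0^t\int_0^1 g_D(t-s,x,y)\lambda\sigma(u(s,y))\,w(ds\,dy)$ and check that $\Phi$ maps $B_{p,\beta}$ into itself. For the deterministic term $D_1$, since $u_0$ is continuous and $g_D$ is a sub-Markovian kernel, $\|D_1(t)\|_{L^\infty}\le \|u_0\|_{L^\infty}$, or better, $\|D_1(t)\|_{L^\infty}\le Ce^{-\pi^2 t/2}\|u_0\|_{L^\infty}$ by the spectral gap of the Dirichlet Laplacian; in any case $\sup_t e^{\beta t}\|D_1(t)\|_{L^\infty}^p<\infty$ for $\beta<0$ small. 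The real work is the stochastic term: using the factorization method (or a Kolmogorov-continuity argument combined with Burkholder's inequality) one estimates $\mathbb{E}\big[\|\int_0^t\int_0^1 g_D(t-s,\cdot,y)\lambda\sigma(u(s,y))\,w(ds\,dy)\|_{L^\infty}^p\big]$ by a constant times $\lambda^p$ times $\big(\int_0^t (t-s)^{-\rho}\,e^{-\beta s}\,ds\big)^{p/\cdot}\sup_s\mathbb{E}[e^{\beta s}\|u(s)\|_{L^\infty}^p]$ for a suitable exponent $\rho<1$ arising from the space-time regularity of $g_D$ (this is where $p>2$ is needed, so that the Sobolev/Besov embedding giving the $L^\infty$-norm of the stochastic convolution has room).

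The crucial computation is then to bound $e^{\beta t}\int_0^t (t-s)^{-\rho} e^{-\beta s}\,ds = \int_0^t r^{-\rho} e^{\beta r}\,dr$ uniformly in $t$; since $\beta<0$ this integral is at most $\int_0^\infty r^{-\rho}e^{\beta r}\,dr = \Gamma(1-\rho)\,|\beta|^{\rho-1}$, which tends to $0$ as $\beta\to-\infty$. Hence there exists $\beta_0<0$ so large in absolute value that $C\lambda^p \cdot\big(\Gamma(1-\rho)|\beta|^{\rho-1}\big)^{\text{power}} \le \tfrac12$ for all $\beta<\beta_0$; combined with the Lipschitz bound $\sigma(0)=0$, $|\sigma(u)-\sigma(v)|\le K_U|u-v|$ this makes $\Phi$ both a self-map of $B_{p,\beta}$ and a strict contraction there. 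The Banach fixed-point theorem then yields a unique $u_\lambda\in B_{p,\beta}$, and the standard localization/pathwise-uniqueness argument (since any mild solution automatically lies in $B_{p,\beta}$ once $|\beta|$ is large enough, by the same estimates applied to an a priori solution) upgrades this to genuine uniqueness among mild solutions.

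For the final assertion, the membership $u_\lambda\in B_{p,\beta}$ means exactly $\sup_{t\ge 0}\mathbb{E}[e^{\beta t}\|u_\lambda(t)\|_{L^\infty}^p] = \|u_\lambda\|_{p,\beta}^p < \infty$, so $\mathbb{E}[\|u_\lambda(t)\|_{L^\infty}^p]\le \|u_\lambda\|_{p,\beta}^p\, e^{-\beta t}$; taking $\tfrac1t\log$ of both sides and letting $t\to\infty$ gives $\limsup_{t\to\infty}\tfrac1t\log\mathbb{E}[\|u_\lambda(t)\|_{L^\infty}^p]\le -\beta$, which is the claimed bound for every $\beta<\beta_0$.

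**Main obstacle.** The delicate step is the $L^\infty$-in-space estimate of the stochastic convolution together with tracking the dependence of the constant on $\beta$: one must pass through a space-time Hölder or Besov norm (via Kolmogorov's continuity theorem or the factorization lemma) and verify that the resulting time integral is a convolution with a kernel $r^{-\rho}$, $\rho<1$, whose weighted $L^1$-mass $\int_0^\infty r^{-\rho}e^{\beta r}dr$ genuinely vanishes as $\beta\to-\infty$ — this is what forces the existence of $\beta_0$ and is the technical heart of the proof. The rest (handling $D_1$, the Lipschitz estimate, the fixed-point/uniqueness bookkeeping) is routine.
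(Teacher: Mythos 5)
Your proposal is correct and follows essentially the same route as the paper: a Banach fixed-point argument in $B_{p,\beta}$, with the $L^\infty$-in-space control of the stochastic convolution obtained via a quantitative Kolmogorov-continuity (Garsia--Rodemich--Rumsey) estimate combined with Burkholder's inequality, and the contraction forced by the fact that $\int_0^\infty r^{-\rho}e^{\beta r}\,dr \sim |\beta|^{\rho-1}\to 0$ as $\beta\to-\infty$ (this is exactly the content of the paper's Lemma \ref{lem-2.1-0531} and Lemmas \ref{lem-2.3-0601}--\ref{lem-2.4-0601}). The final growth-rate assertion is deduced from membership in $B_{p,\beta}$ exactly as you describe.
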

\begin{rem}
$(i)$
The condition $\sigma(0)=0$ is not required and our result holds for any Lipschitz continuous function $\sigma$.\\ 
$(ii)$
From the proof of this theorem, it is known that the same result holds for \eqref{Spde-1.1} with homogeneous Neumann boundary condition.\\
$(iii)$ In stead of the $L^\infty$-norm of $u_\lambda(t,x)$, the similar behavior of the $p$-th absolute moment of $u_\lambda(t,x)$ is initially investigated in \cite{FoKh-09} for stochastic heat equations on $\mathbb{R}$, and then is restudied in \cite{FoNu} on a bounded domain.  
\end{rem}

 In the next theorem, we are going to  show that if $\lambda$ is small enough, then  the $p$-th  moment of $\|u_\lambda(t)\|_{L^\infty}$ is exponentially stable.
\begin{thm}\label{thm-1.2-0608}
There exists $\lambda_L$ such that for $\lambda \in (0, \lambda_L)$, 
\begin{align}\label{eq-1.3}
-\infty <\limsup_{t\to  \infty}\frac1t\log\mathbb{E}[|u_\lambda(t,x)|^p] \leq \limsup_{t\to  \infty}\frac1t\log\mathbb{E}[\|u_\lambda(t)\|_{L^\infty}^p] <0.
\end{align}
\end{thm}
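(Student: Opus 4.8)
The middle inequality in \eqref{eq-1.3} is just $|u_\lambda(t,x)|\le\|u_\lambda(t)\|_{L^\infty}$, so the two substantive claims are the rightmost one (exponential stability) and the leftmost one (the decay is no faster than exponential). The plan is to obtain the first by rerunning the fixed-point argument behind Theorem \ref{thm-1.1-0531}, but now in the space $B_{p,\beta}$ for a \emph{positive} weight $\beta$ instead of a negative one; this is made possible by the spectral gap $\pi^2/2$ of $-\tfrac12\Delta$ on $[0,1]$ with Dirichlet data, and it is what forces $\lambda$ to be small. The second claim is comparatively soft and comes from the deterministic part $D_1$.

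For the first claim, fix $\beta\in(0,\beta^\ast)$, where $\beta^\ast>0$ is to be identified, and consider $\mathcal T v:=D_1+\mathcal N_\lambda(v)$ on $B_{p,\beta}$, with $\mathcal N_\lambda(v)(t,x):=\lambda\int_0^t\!\int_0^1 g_D(t-s,x,y)\,\sigma(v(s,y))\,w(ds\,dy)$. Two ingredients are needed. First, $D_1\in B_{p,\beta}$: from $g_D(t,x,y)=\sum_{n\ge1}2e^{-n^2\pi^2t/2}\sin(n\pi x)\sin(n\pi y)$ one has $\|D_1(t)\|_{L^\infty}\le C e^{-\pi^2 t/2}\|u_0\|_{L^\infty}$, so $\sup_{t\ge0}e^{\beta t}\|D_1(t)\|_{L^\infty}^p<\infty$ whenever $\beta<p\pi^2/2$. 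Second, a contraction estimate $\|\mathcal N_\lambda(v)-\mathcal N_\lambda(\tilde v)\|_{p,\beta}\le C(p,\beta)\,\lambda K_U\,\|v-\tilde v\|_{p,\beta}$. To produce the second with the $L^\infty$ norm — the supremum over $x$ cannot be moved inside $\mathbb E$ — I would use the factorization method: write $\mathcal N_\lambda(v)(t)=c_\alpha\int_0^t(t-s)^{\alpha-1}g_D(t-s)\,Y_\alpha(v)(s)\,ds$ with $\alpha\in(1/(2p),1/4)$ and $Y_\alpha(v)(s)=\lambda\int_0^s(s-r)^{-\alpha}g_D(s-r)\,\sigma(v(r))\,w(dr)$, use the smoothing bound $\|g_D(\tau)f\|_{L^\infty}\le C\,\phi(\tau)\,\|f\|_{L^q}$ for a suitable $q\in(1/(2\alpha),\,p\,]$, where $\phi(\tau)\le C\tau^{-1/(2q)}$ for small $\tau$ and $\phi$ decays exponentially for large $\tau$, and control $Y_\alpha(v)(s)$ in $L^q_x\!\big(L^p(\Omega)\big)$ by Burkholder--Davis--Gundy together with the Lipschitz bound $|\sigma(v)|\le K_U|v|$ (here $\sigma(0)=0$ from $(A.2)$ is essential) and the heat-kernel estimate $\sup_{x}\int_0^1 g_D(\tau,x,y)^2\,dy=\sup_x g_D(2\tau,x,x)$, which behaves like $\tau^{-1/2}$ for small $\tau$ and decays like $e^{-\pi^2\tau}$ for large $\tau$. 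Distributing the weight $e^{\beta t}$ through the two time convolutions and inserting $\mathbb E[\|v(r)\|_{L^\infty}^p]^{2/p}\le\|v\|_{p,\beta}^2\,e^{-2\beta r/p}$ at each stage reduces everything to convolution integrals of the type $\int_0^\infty\tau^{-a}e^{-c\tau}e^{b\beta\tau}\,d\tau$ with $a<1$ (integrability at $0$, where $\alpha<1/4$ and $q>1/(2\alpha)$ are used) and $c>0$ a positive multiple of $\pi^2$; these converge for every $\beta$ below an explicit $\beta^\ast\in(0,p\pi^2/2]$, and the resulting $C(p,\beta)$ is then finite. Crucially it does \emph{not} tend to $0$, in contrast with the $\beta\to-\infty$ regime exploited for Theorem \ref{thm-1.1-0531}; so, fixing one admissible $\beta$ and setting $\lambda_L:=\big(C(p,\beta)K_U\big)^{-1}>0$, the map $\mathcal T$ is a strict contraction on $B_{p,\beta}$ for every $\lambda\in(0,\lambda_L)$.

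Banach's fixed point theorem then gives a unique $u_\lambda\in B_{p,\beta}$ with $\mathcal T u_\lambda=u_\lambda$; since $\beta>0$ implies $B_{p,\beta}\subset B_{p,\beta'}$ for every $\beta'\le\beta$, in particular for some $\beta'<\beta_0$, the uniqueness part of Theorem \ref{thm-1.1-0531} identifies $u_\lambda$ with the mild solution, and $\mathbb E[\|u_\lambda(t)\|_{L^\infty}^p]\le\|u_\lambda\|_{p,\beta}^p\,e^{-\beta t}$ yields $\limsup_{t\to\infty}\tfrac1t\log\mathbb E[\|u_\lambda(t)\|_{L^\infty}^p]\le-\beta<0$, the rightmost inequality. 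For the leftmost one, the Walsh integral $D_{2,\lambda}(t,x)$ has mean zero, so by Jensen $\mathbb E[|u_\lambda(t,x)|^p]\ge|\mathbb E[u_\lambda(t,x)]|^p=|D_1(t,x)|^p$; since $u_0\not\equiv0$ by $(A.1)$, the expansion $D_1(t,x)=\sum_{n\ge1}c_n\sqrt2\,e^{-n^2\pi^2t/2}\sin(n\pi x)$ has a lowest nonvanishing coefficient $c_{n_0}$, and for $x\in(0,1)$ not a common node of the active modes one gets $\tfrac1t\log|D_1(t,x)|\to-n_0^2\pi^2/2$, hence $\limsup_{t\to\infty}\tfrac1t\log\mathbb E[|u_\lambda(t,x)|^p]\ge-p\,n_0^2\pi^2/2>-\infty$ (at the finitely many exceptional $x$ one bounds $\mathbb E[|u_\lambda(t,x)|^p]$ from below by the variance of $D_{2,\lambda}(t,x)$ instead).

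I expect the only genuine difficulty to be the $L^\infty$ contraction estimate: passing from a pointwise-in-$x$ $p$-th moment bound to a bound on $\mathbb E[\sup_x|\cdot|^p]$ forces one either to run the factorization argument above — choosing $\alpha\in(0,1/4)$ and the Lebesgue exponent $q$ so that both the singular factor $(t-s)^{\alpha-1}$ and the kernel powers stay integrable near the diagonal — or to prove a spatial modulus-of-continuity estimate and invoke Kolmogorov's criterion; in either route the exponential weight must be tracked carefully so that $C(p,\beta)$ stays finite for some $\beta>0$ and $\lambda_L$ is genuinely positive. This is exactly where the Dirichlet condition is used: without the spectral gap (e.g.\ on $\mathbb R$, or under Neumann conditions) the relevant constant would be infinite for every $\beta>0$ and no such $\lambda_L$ would exist, which is why Theorem \ref{thm-1.2-0608} is not stated for the Neumann problem.
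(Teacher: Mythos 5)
Your overall architecture coincides with the paper's: prove exponential stability by showing that the solution lies in the weighted space $B_{p,\beta}$ for some \emph{positive} $\beta$, which is possible precisely because the Dirichlet spectral gap makes the weighted-in-time kernel integrals converge with a finite (but not small) constant, and then take $\lambda_L$ so that $\lambda K_U$ times that constant is below $1$. Where you genuinely diverge is in the technical engine for bounding $\mathbb{E}\bigl[\sup_x|Su(t,x)|^p\bigr]$: the paper estimates the $p$-th moment of spatial increments of $Su(t,\cdot)$, feeds this into a quantitative Kolmogorov/Garsia--Rodemich--Rumsey lemma (its Lemma \ref{lem-2.2}) whose constant is tracked uniformly in $t$, and combines it with a weighted kernel estimate (Lemma \ref{lem-2.1}) exploiting $g_D(t,x,y)\le K_3e^{-\pi^2 t}$ for $t\ge 1$; you instead run the factorization method with $\alpha\in(1/(2p),1/4)$ and an intermediate $L^q_x$ space. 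Both routes are sound and your exponent bookkeeping ($\alpha<1/4$ for the inner singular integral, $q>1/(2\alpha)$ for the $L^q\to L^\infty$ smoothing, $\beta$ below an explicit threshold coming from the first Dirichlet eigenvalue) is consistent. Your fixed-point formulation in $B_{p,\beta}$, $\beta>0$, followed by the embedding $B_{p,\beta}\subset B_{p,\beta'}$ and identification with the Theorem \ref{thm-1.1-0531} solution, is in fact cleaner than the paper's presentation, which applies the estimate directly to $u_\lambda$ and leaves the absorption step implicit. For the leftmost inequality the paper simply invokes Jensen's inequality together with Theorem 1.1 of \cite{FoNu}, whereas you give a self-contained argument via $\mathbb{E}[|u_\lambda(t,x)|^p]\ge|D_1(t,x)|^p$ and the eigenfunction expansion; this buys independence from the external reference.

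One small caveat on that last point: your fallback at the exceptional $x$ (where all active Fourier modes of $u_0$ vanish), namely bounding $\mathbb{E}[|u_\lambda(t,x)|^p]$ from below by the variance of $D_{2,\lambda}(t,x)$, requires a lower bound $|\sigma(u)|\ge K_L|u|$, i.e.\ assumption $(A3)$, which is \emph{not} in force in Theorem \ref{thm-1.2-0608}; under $(A.2)$ alone the variance could vanish. Since the theorem fixes no particular $x$ and the main eigenfunction argument covers all $x$ with $D_1(\cdot,x)\not\equiv 0$, this is a cosmetic gap (and the paper's own treatment of the lower bound is no more self-contained), but you should either restrict to such $x$ or drop the fallback.
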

\begin{rem}
$(i)$ In fact,  we can prove that there exists a $\beta \in (0, (2-\alpha)\pi^2)
$ for some $\alpha\in (2/p, 1)$ such that for all $\lambda \in (0, \lambda_L)$ and all $t \geq 0$, $\mathbb{E}[ \|u(t)\|_{L^\infty}^p ] \leq e^{-\beta t }$, see the proof of this theorem in Section 2.\\
$(ii)$ It is easy to know that for $\lambda <\lambda_L$, the solution $u_\lambda$ is not weakly intermittent. According to \cite{FoKh-09}, we recall that the solution $u_\lambda$ is of weak intermittence  if for any $p\geq 2$, $$\limsup_{t\to  \infty}\frac1t\log\mathbb{E}[|u_\lambda(t,x)|^p]\in (0, \infty).$$ \\
$(iii)$ The lower bound can be easily proved by Jensen's inequality and Theorem 1.1 \cite{FoNu}. As a by-product of the proof of Theorem \ref{thm-3.1-0608} below, we have another proof of it, see Section 3.\\
$(iv)$ We point out that in this paper we will not take $\lambda \to 0$, which is the problem of the large deviations principle for small noises(Freidlin-Wentzell large deviation principle), see \cite{ZhTu-12} and references therein.
\end{rem}

It is now natural to ask what will happen for the  solution $u_\lambda(t,x)$ with large noise intensity $\lambda$  as $t\to \infty$. It is recently studied in  \cite{FoNu}. On the other hand,  as we mentioned in the above, we are also interested in 
the excitation of non-linear noise, see \cite{KhKi-15}, \cite{KhKi-15-1} and \cite{FoJo-14}. The proof for the lower bound  is hard,  as we stated above and different methods are introduced respectively  in \cite{FoJo-14} and \cite{KhKi-15}. It seems that there is no relation between the method for the lower bound in \cite{FoNu} for growth rate as $t\to \infty$ and that in 
\cite{FoJo-14} for the noise excitability. However, we believe that the large time behavior for large, but fixed $\lambda$ and the excitation of non-linear noise are essentially same, and  thus there must be a common approach to study both phenomena. As expected,  we can find such common approach, see Theorem \ref{thm-3.1-0608} below, which is our main contribution.  Our approach essentially depends  on the lower bound of the global estimate for $g_D(t,x, y)$, see Lemma \ref{lem-3.1-0608}.

%
Before we state our key theorem, let us further impose the next assumption   on the coefficient $\sigma$: 
\\
$(A3)$ For any $u \in \mathbb{R}$, 
$$ K_{L}|u| \leq \sigma |u|,$$
where $K_L>0$ is a constant. It is clear that $K_U \geq K_L$ is required.\\
Then we have the next theorem, which plays a key role in this paper.
\begin{thm}\label{thm-3.1-0608}
If further $(A3)$ is fulfilled, then there exist two constants $\kappa_1>0$ and $\kappa_2>0$ such that  for all $t>0$
\begin{align}\label{eq-3.1-0608}
\inf_{x\in [\gamma, 1-\gamma]}\mathbb{E}[|u_\lambda(t,x)|^2] \geq \kappa_1\exp(-2\pi^2 t+\kappa_2 \lambda^4 K_L^4 t).
\end{align}
\end{thm}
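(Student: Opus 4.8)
The plan is to exploit the Walsh isometry to turn the second‑moment identity into an integral inequality, and then iterate it (a renewal/Picard‑type bootstrap) to extract the exponential lower bound. Starting from the mild formulation \eqref{eq-2.3-1}, squaring and taking expectations kills the cross term (the stochastic integral is a martingale with mean zero and is independent, after conditioning, of the deterministic part), so
\begin{align*}
\mathbb{E}[|u_\lambda(t,x)|^2] = |D_1(t,x)|^2 + \lambda^2 \int_0^t\!\!\int_0^1 g_D(t-s,x,y)^2\,\mathbb{E}[\sigma(u_\lambda(s,y))^2]\,dy\,ds.
\end{align*}
Using $(A3)$ in the form $\sigma(u)^2 \geq K_L^2 u^2$ and dropping the nonnegative term $|D_1(t,x)|^2$, I get the clean sub‑solution inequality
\begin{align*}
\mathbb{E}[|u_\lambda(t,x)|^2] \geq \lambda^2 K_L^2 \int_0^t\!\!\int_0^1 g_D(t-s,x,y)^2\,\mathbb{E}[|u_\lambda(s,y)|^2]\,dy\,ds.
\end{align*}
This is the engine: I want a lower bound for $F(t):=\inf_{x\in[\gamma,1-\gamma]}\mathbb{E}[|u_\lambda(t,x)|^2]$ of the form $\kappa_1 e^{-2\pi^2 t + c\lambda^4 K_L^4 t}$.

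The first key input is Lemma \ref{lem-3.1-0608}, the global lower bound on $g_D(t,x,y)$. I expect it to say something like $g_D(t,x,y) \geq c\, e^{-\pi^2 t}\,\phi_1(x)\phi_1(y)$ where $\phi_1(x)=\sin(\pi x)$ is the first Dirichlet eigenfunction (the leading term of the eigenfunction expansion $g_D(t,x,y)=\sum_n e^{-\pi^2 n^2 t/2}\phi_n(x)\phi_n(y)$, suitably handled for small $t$). On $[\gamma,1-\gamma]$, $\phi_1$ is bounded below by a positive constant, so $g_D(t,x,y)^2 \gtrsim e^{-2\pi^2 t}$ uniformly for $x,y\in[\gamma,1-\gamma]$ — this is precisely where the $-2\pi^2 t$ in \eqref{eq-3.1-0608} comes from, and why restricting $x$ (and the $y$‑integral) to the interior interval is essential. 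Feeding this into the inequality and restricting the $y$‑integration to $[\gamma,1-\gamma]$,
\begin{align*}
F(t) \geq c\,\lambda^2 K_L^2 \int_0^t e^{-2\pi^2(t-s)} F(s)\,ds \qquad (t\geq 0),
\end{align*}
at least once I also provide a strictly positive lower bound for $F$ on a small initial interval $[0,t_0]$, which follows from keeping the $|D_1(t,x)|^2$ term together with $(A.1)$ (the support condition on $u_0\cap[\gamma,1-\gamma]$) and continuity.

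From here the scheme is standard but must be run carefully to get the $\lambda^4$ exponent rather than $\lambda^2$. Writing $G(t) := e^{2\pi^2 t} F(t)$, the inequality becomes $G(t) \geq c\lambda^2 K_L^2 \int_0^t G(s)\,ds$ with $G$ bounded below by a positive constant on $[0,t_0]$; naively Gronwall gives only $G(t)\gtrsim e^{c\lambda^2 K_L^2 t}$, i.e. excitation $2$. The improvement to $\lambda^4$ is exactly the subtle point that was "hard" in \cite{FoJo-14} and \cite{KhKi-15}: one must not throw away the spatial structure so crudely. The right move is to keep the full kernel $g_D(t-s,x,y)^2$ (or at least two eigenmodes) and observe that over a short time window $\delta \sim \lambda^{-2}$ the relevant heat‑kernel mass behaves like $\int_0^\delta\!\!\int g_D^2 \sim \sqrt{\delta} \sim \lambda^{-1}$, so that one renewal step multiplies $F$ by roughly $\lambda^2 K_L^2 \cdot \lambda^{-1} = \lambda K_L^2$ (heuristically), but the number of such steps fitting into time $t$ scales like $t/\delta \sim \lambda^2 t$; iterating $n = \lfloor t/\delta\rfloor$ times and optimizing over $\delta$ yields growth $\exp(\kappa_2 \lambda^4 K_L^4 t)$ after multiplying out. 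Concretely I would: (i) fix $\delta = a/\lambda^2$ for a small constant $a$; (ii) show $\int_{(k-1)\delta}^{k\delta}\!\!\int_\gamma^{1-\gamma} g_D(k\delta - s, x, y)^2\,dy\,ds \geq c'\sqrt{\delta}\,\mathbf{1}_{\{x\in[\gamma,1-\gamma]\}}$ using the short‑time Gaussian lower bound for $g_D$ near the diagonal (valid on the interior, away from the boundary); (iii) deduce $F(k\delta) \geq (c'\lambda^2 K_L^2 \sqrt{\delta})\, F((k-1)\delta) \cdot e^{-2\pi^2\delta}$; (iv) iterate to get $F(n\delta) \geq F(0^+)\,\big(c' \lambda^2 K_L^2\sqrt{\delta}\,e^{-2\pi^2\delta}\big)^n$, substitute $\delta = a/\lambda^2$, so $\lambda^2\sqrt{\delta} = \sqrt{a}\,\lambda$ and $n = t/\delta = \lambda^2 t/a$, giving $F(t) \geq \kappa_1 \exp\!\big(\lambda^2 t a^{-1}\log(c'\sqrt{a}\,K_L^2\,\lambda)\big) e^{-2\pi^2 t}$, and finally choose $a$ as a function of $\lambda$ (or absorb a power of $\lambda$) to convert $\lambda^2 \log \lambda$ into $\lambda^4$.

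The main obstacle, and the place I would spend the most care, is step (iv)–(v): making the iteration honest. Two technical dangers: first, the conditioning/independence used to drop the cross term in the second‑moment identity must be justified at each renewal time (this is where $\mathcal{F}_t$‑adaptedness of $u_\lambda$ and the Markov property of the noise enter — one should really write the renewal inequality as $\mathbb{E}[u_\lambda(t,x)^2] \geq \lambda^2 K_L^2 \int_{t-\delta}^t\!\!\int g_D(t-s,x,y)^2 \mathbb{E}[u_\lambda(s,y)^2]\,dy\,ds$ directly from the isometry, which needs no extra Markov argument — so actually the honest route avoids conditioning entirely and just iterates this one inequality). Second, getting the exponent to be genuinely $\lambda^4$ and not $\lambda^4\log\lambda$ or $\lambda^{4-\epsilon}$ requires the optimization over the window length $\delta$ to be done at the level of the $n$‑fold product, keeping track that $\big(\lambda^2\sqrt\delta\big)^{t/\delta}$ is maximized (in the exponent) around $\delta \sim \lambda^{-2}$ up to the logarithmic correction, and then noting that one is free to also not iterate with a fixed window but instead bound a single $\delta$‑step more sharply using the exact value of $\int_0^\delta (4\pi s)^{-1/2}\,ds = \sqrt{\delta/\pi}$; chasing the constants there is exactly the "hard part" the paper advertises it is resolving, and I would present it as the technical heart of the argument rather than gloss it. I would also double‑check the edge effects: since $g_D \leq g_{\mathbb{R}}$ (heat kernel on the line), the short‑time lower bound $g_D(s,x,y) \geq (2\pi s)^{-1/2} e^{-(x-y)^2/(2s)} - (\text{reflection terms})$ is only uniformly good for $x,y$ bounded away from $\{0,1\}$ and $s \lesssim \gamma^2$, which is automatically satisfied since $\delta \sim \lambda^{-2}$ is small for large $\lambda$ — but for the statement to hold for \emph{all} $t>0$ (including small $t$, and moderate $\lambda$) one must handle small $\lambda$/small $t$ separately, where one can afford the crude Gronwall bound since $e^{\kappa_2\lambda^4 K_L^4 t}$ is then just a bounded factor absorbed into $\kappa_1$.
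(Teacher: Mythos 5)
Your first half coincides with the paper's proof: the It\^o/Walsh isometry kills the cross term (since $D_1$ is deterministic), $(A3)$ supplies the prefactor $\lambda^2K_L^2$, and Lemma \ref{lem-3.1-0608} restricted to $x,y\in[\gamma,1-\gamma]$ produces both the seed $|D_1(t,x)|^2\geq C e^{-2\pi^2 t}$ (valid for \emph{all} $t>0$, not just an initial interval, which is what lets the paper avoid any separate small-$t$ or small-$\lambda$ case) and the factor $e^{-2\pi^2(t-s)}$ in the renewal kernel. The divergence --- and the gap --- is in how you extract $\lambda^4$. You deliberately integrate out the spatial variable into the non-singular inequality $F(t)\geq c\lambda^2K_L^2\int_0^t e^{-2\pi^2(t-s)}F(s)\,ds$, observe correctly that this only yields excitation $2$, and then try to recover $\lambda^4$ by a discrete renewal over windows of length $\delta$. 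As you yourself note, with $\delta\sim\lambda^{-2}$ this produces $\exp(c\lambda^2t\log\lambda)$, and the promised repair (``choose $a$ as a function of $\lambda$'') is precisely the step you leave undone; for the record, optimizing $\bigl(c'\lambda^2K_L^2\sqrt{\delta}\,\bigr)^{t/\delta}$ in $\delta$ gives the optimal window $\delta\sim\lambda^{-4}$ (each step then gains a fixed constant and there are $\sim\lambda^4t$ steps), not $\delta\sim\lambda^{-2}$. A secondary issue in your step (iii): $F((k-1)\delta)$ is an infimum at a single time, whereas the window integral needs a lower bound on $\mathbb{E}[u_\lambda(s,y)^2]$ for all $s$ in the window, so the quantity you iterate should be $\inf_{s\in[(k-1)\delta,k\delta]}\inf_x\mathbb{E}[|u_\lambda(s,x)|^2]$.

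The paper's proof shows that none of this bookkeeping is necessary: the point is to \emph{keep} the singularity rather than discard it. The short-time part of Lemma \ref{lem-3.1-0608} gives $\int_\gamma^{1-\gamma}g_D^2(t-s,x,y)\,dy\geq c\,e^{-2\pi^2(t-s)}(t-s)^{-1/2}$ for $x\in[\gamma,1-\gamma]$ (integrate the Gaussian factor over $\{y\in[\gamma,1-\gamma]:|y-x|\leq\sqrt{t-s}\}$, a set of measure at least $\sqrt{t-s}$), so that $H(t)=e^{2\pi^2t}\inf_x\mathbb{E}[|u_\lambda(t,x)|^2]$ satisfies $H(t)\geq C+b\int_0^t(t-s)^{-1/2}H(s)\,ds$ with $b=c\lambda^2K_L^2$. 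Composing this Volterra inequality with itself once turns the two factors $(t-s)^{-1/2}$ into a Beta function and \emph{squares} $b$, giving $H(t)\geq C+\pi b^2\int_0^tH(s)\,ds$, whence ordinary Gronwall yields $H(t)\geq C\exp(\pi b^2t)=C\exp(c^2\pi\lambda^4K_L^4t)$ in one stroke. You actually compute the integrated form of this singularity in your step (ii) ($\int_0^\delta\!\int g_D^2\sim\sqrt{\delta}$), so you have every ingredient in hand; the missing idea is that the $(t-s)^{-1/2}$ kernel, fed into a singular Gronwall argument, squares the prefactor for free and makes the window optimization unnecessary.
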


As the first application of  Theorem \ref{thm-3.1-0608}, we will research on the lower bound of the growth rate of the  $p$-th absolute moment of $u_\lambda(t,x)$ with a large noise intensity $\lambda$ as $t \to \infty$.
\begin{thm}\label{thm-1.3-0608}
Under the assumptions in Theorem \ref{thm-3.1-0608}, 
 there  exists $\lambda_U> \lambda_L$
such that for all 
$\lambda \in (\lambda_U, \infty)$ and $ x\in [\gamma, 1-\gamma]$ 
\begin{align} \label{eq-1.9-0608}
0 <\liminf_{t\to  \infty}\frac1t\log\mathbb{E}[|u_\lambda(t,x)|^p] \leq \limsup_{t\to  \infty}\frac1t\log\mathbb{E}[\|u_\lambda(t)\|_{L^\infty}^p] <\infty.
\end{align}
\end{thm}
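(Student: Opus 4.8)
The plan is to derive Theorem~\ref{thm-1.3-0608} by combining the lower bound furnished by Theorem~\ref{thm-3.1-0608} with the upper bound already established in Theorem~\ref{thm-1.1-0531}. The right-most inequality in \eqref{eq-1.9-0608} is immediate: for any fixed $\lambda>0$, Theorem~\ref{thm-1.1-0531} gives $\limsup_{t\to\infty}\frac1t\log\mathbb{E}[\|u_\lambda(t)\|_{L^\infty}^p]\le -\beta_0<\infty$, and the same bound controls the middle quantity since $|u_\lambda(t,x)|\le\|u_\lambda(t)\|_{L^\infty}$. So the whole content is the left-most strict inequality, i.e.\ a genuine exponential growth lower bound for large $\lambda$.

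For that, first I would reduce from the $p$-th moment to the second moment. Since $p>2$, Jensen's inequality (in the form $\mathbb{E}[|X|^2]\le (\mathbb{E}[|X|^p])^{2/p}$ applied to $X=u_\lambda(t,x)$) yields
\begin{align*}
\mathbb{E}[|u_\lambda(t,x)|^p]\ge \big(\mathbb{E}[|u_\lambda(t,x)|^2]\big)^{p/2}.
\end{align*}
Hence
\begin{align*}
\frac1t\log\mathbb{E}[|u_\lambda(t,x)|^p]\ge \frac p2\cdot\frac1t\log\mathbb{E}[|u_\lambda(t,x)|^2]\ge \frac p2\cdot\frac1t\log\Big(\kappa_1\exp\big((-2\pi^2+\kappa_2\lambda^4K_L^4)t\big)\Big),
\end{align*}
using Theorem~\ref{thm-3.1-0608} and the fact that $x\in[\gamma,1-\gamma]$. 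Letting $t\to\infty$ gives
\begin{align*}
\liminf_{t\to\infty}\frac1t\log\mathbb{E}[|u_\lambda(t,x)|^p]\ge \frac p2\big(-2\pi^2+\kappa_2\lambda^4K_L^4\big).
\end{align*}
This is strictly positive precisely when $\kappa_2\lambda^4 K_L^4>2\pi^2$, i.e.\ when $\lambda> \big(2\pi^2/(\kappa_2K_L^4)\big)^{1/4}=:\tilde\lambda_U$. Setting $\lambda_U:=\max\{\lambda_L,\tilde\lambda_U\}$ (enlarging if necessary so that $\lambda_U>\lambda_L$ as asserted) completes the argument.

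There is essentially no hard step here — the theorem is a corollary of the two cited results plus Jensen — so the only thing to be careful about is bookkeeping: checking that $\kappa_1,\kappa_2$ do not secretly depend on $\lambda$ or $t$ (they do not, by the statement of Theorem~\ref{thm-3.1-0608}), that the constant $\lambda_U$ can indeed be chosen larger than $\lambda_L$, and that the spatial restriction $x\in[\gamma,1-\gamma]$ is carried through consistently (it is needed for the lower bound but not for the upper bound). The genuine difficulty of the paper lies entirely upstream, in proving Theorem~\ref{thm-3.1-0608} via the global lower heat-kernel estimate of Lemma~\ref{lem-3.1-0608}; once that is in hand, Theorem~\ref{thm-1.3-0608} follows in a few lines.
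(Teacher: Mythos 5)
Your proposal is correct and follows essentially the same route as the paper: the finiteness of the upper bound is quoted from Theorem~\ref{thm-1.1-0531}, the $p$-th moment is reduced to the second moment by Jensen's inequality, and the strict positivity of the $\liminf$ is read off from the exponential lower bound of Theorem~\ref{thm-3.1-0608}, with $\lambda_U=\big(2\pi^2/(\kappa_2K_L^4)\big)^{1/4}$ (enlarged if necessary to exceed $\lambda_L$). Your explicit check that $\lambda_U$ can be taken larger than $\lambda_L$ is a small point the paper passes over silently.
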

\begin{rem}
The phenomena in Theorem \ref{thm-1.2-0608} and Theorem \ref{thm-1.3-0608} are peculiar to \eqref{Spde-1.1} with Dirichlet boundary condition, which  are not satisfied for Neumann boundary condition, see \cite{FoNu}.   These display some kind of competition between the noise and dissipativity of the Dirichlet Laplacian. 

\end{rem}

For $p \geq 2$, let us  introduce the $p$-th energy $\mathcal{E}_p(t, \lambda)$ relative to the solution $u_\lambda(t,x)$ of \eqref{Spde-1.1} at time $t>0$ as below:
\begin{align*}
\mathcal{E}_p(t, \lambda) =(\mathbb{E}[\|u_\lambda(t)\|_{L^p}^p])^{\frac{1}{p}}.
\end{align*}
We remark that when $p=2$, $\mathcal{E}_p(t, \lambda)$ is called $L^2$-energy  in  \cite{FoJo-14}  \cite{FoNu}, \cite{KhKi-15}, and \cite{KhKi-15-1} and we generalize it to the definition of $p$-th energy.
\begin{cor}\label{cor-1.5-0611}
Suppose the assumptions in Theorem \ref{thm-1.3-0608} are fulfilled.
Let $\lambda_L$ and $\lambda_U$ be the same constants as that appeared in Theorem \ref{thm-1.2-0608} and Theorem \ref{thm-1.3-0608}  respectively. Then for $\lambda < \lambda_L$,
\begin{align*}
-\infty <\limsup_{t\to  \infty}\frac1t\log\mathcal{E}_p(t, \lambda)  <0,
\end{align*}
and for $\lambda >\lambda_U$
\begin{align} \label{eq-1.9-0608}
0 <\liminf_{t\to  \infty}\frac1t\log\mathcal{E}_p(t, \lambda)<\infty.
\end{align}
\end{cor}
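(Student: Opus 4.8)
\emph{Proof proposal.} The plan is simply to sandwich $\mathcal{E}_p(t,\lambda)$ between quantities already controlled in Theorems \ref{thm-1.1-0531}, \ref{thm-1.2-0608} and \ref{thm-3.1-0608}, using only that $[0,1]$ carries a probability measure. For the upper bounds, since $\|u_\lambda(t)\|_{L^p}^p=\int_0^1|u_\lambda(t,x)|^p\,dx\le\|u_\lambda(t)\|_{L^\infty}^p$, we get $\mathcal{E}_p(t,\lambda)\le\big(\mathbb{E}[\|u_\lambda(t)\|_{L^\infty}^p]\big)^{1/p}$ (which is finite for every $t$ by Theorem \ref{thm-1.1-0531}), hence
\[
\limsup_{t\to\infty}\frac1t\log\mathcal{E}_p(t,\lambda)\le\frac1p\limsup_{t\to\infty}\frac1t\log\mathbb{E}[\|u_\lambda(t)\|_{L^\infty}^p].
\]
When $\lambda<\lambda_L$ the right-hand side is $<0$ by Theorem \ref{thm-1.2-0608} (in fact $\le-\beta/p<0$ with the $\beta$ of its Remark $(i)$), and when $\lambda>\lambda_U$ it is finite by Theorem \ref{thm-1.1-0531}. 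This establishes the two upper bounds in the statement.

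For the lower bounds, I would restrict the spatial integral to $[\gamma,1-\gamma]$ and apply Jensen's inequality with the convex map $y\mapsto y^{p/2}$ (legitimate since $p\ge2$):
\[
\mathbb{E}[\|u_\lambda(t)\|_{L^p}^p]\ge\int_\gamma^{1-\gamma}\mathbb{E}[|u_\lambda(t,x)|^p]\,dx\ge\int_\gamma^{1-\gamma}\big(\mathbb{E}[|u_\lambda(t,x)|^2]\big)^{p/2}\,dx\ge(1-2\gamma)\Big(\inf_{x\in[\gamma,1-\gamma]}\mathbb{E}[|u_\lambda(t,x)|^2]\Big)^{p/2}.
\]
Inserting the global lower bound of Theorem \ref{thm-3.1-0608} yields
\[
\mathcal{E}_p(t,\lambda)\ge(1-2\gamma)^{1/p}\,\kappa_1^{1/2}\exp\!\Big(\tfrac12\big(-2\pi^2+\kappa_2\lambda^4 K_L^4\big)t\Big),
\]
so that $\liminf_{t\to\infty}\frac1t\log\mathcal{E}_p(t,\lambda)\ge\tfrac12\big(-2\pi^2+\kappa_2\lambda^4 K_L^4\big)$. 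For every $\lambda>0$ this is $>-\infty$, which gives the lower bound in the regime $\lambda<\lambda_L$; and once $\kappa_2\lambda^4K_L^4>2\pi^2$, which holds for all $\lambda>\lambda_U$ after enlarging $\lambda_U$ if necessary, it is strictly positive, giving the lower bound in the regime $\lambda>\lambda_U$.

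There is essentially no difficult step remaining once Theorems \ref{thm-1.1-0531}, \ref{thm-1.2-0608} and \ref{thm-3.1-0608} are available; the only points needing a line of care are the use of $p\ge2$ in the Jensen estimate and the verification that $\lambda_U$ can be taken large enough for the exponent $-2\pi^2+\kappa_2\lambda^4 K_L^4$ to be positive. The latter is consistent with the way $\lambda_U$ is fixed in Theorem \ref{thm-1.3-0608}, where the very same exponent drives the exponential growth of $\mathbb{E}[|u_\lambda(t,x)|^p]$, so no new threshold is introduced.
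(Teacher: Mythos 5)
Your proposal is correct and follows essentially the same route as the paper: the upper bounds come from $\|u_\lambda(t)\|_{L^p}^p\le\|u_\lambda(t)\|_{L^\infty}^p$ together with Theorems \ref{thm-1.1-0531} and \ref{thm-1.2-0608}, and the lower bounds come from restricting the spatial integral to $[\gamma,1-\gamma]$, applying Jensen's inequality with $y\mapsto y^{p/2}$, and inserting the bound of Theorem \ref{thm-3.1-0608}. Your observation that no enlargement of $\lambda_U$ is needed is consistent with the paper, where $\lambda_U=(2\pi^2/(\kappa_2K_L^4))^{1/4}$ is chosen precisely so that the exponent $\kappa_2\lambda^4K_L^4-2\pi^2$ is positive for $\lambda>\lambda_U$.
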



Let us now turn to study the non-linear noise excitability of  the stochastic heat equation \eqref{Spde-1.1} by letting thenoise intensity $\lambda$ go to infinity for each $t>0$ as another application of Theorem \ref{thm-3.1-0608}.  
\begin{thm} \label{thm-1.6-0610}
Under the assumptions in Theorem \ref{thm-3.1-0608}, for all $t >0$, there  is constant $c_p>0$ such that 
\begin{align*}
c_p  K_{L}^4 t\leq &  \liminf_{\lambda \to \infty} \lambda^{-4} K_L\log\Big(\inf_{x\in [\gamma, 1-\gamma]}\mathbb{E}[|u_\lambda(t,x)|^p] \Big)\\
 \leq &
\limsup_{\lambda \to \infty} \lambda^{-4} \log\Big( \sup_{x\in [0,1]}\mathbb{E}[|u_\lambda(t,x)|^p] \Big) \leq c_p^{-1} K_U^4 t \notag
\end{align*}
\end{thm}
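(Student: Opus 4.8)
The statement is a two-sided estimate, and I would establish its two halves by entirely separate arguments, fixing $c_p$ only at the very end to be the minimum of the constant produced by the lower bound and the reciprocal of the constant produced by the upper bound, so that both inequalities hold with the same $c_p$. The lower (left-most) inequality is essentially a corollary of Theorem \ref{thm-3.1-0608}; the upper (right-most) inequality is the standard $p$-th moment bound for the mild solution, whose only delicate point is tracking the origin of the power $\lambda^{4}$.

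\emph{Lower bound.} Fix $t>0$ and $x\in[\gamma,1-\gamma]$. Since $r\mapsto r^{p/2}$ is convex on $[0,\infty)$ for $p\ge2$, Jensen's inequality gives $\mathbb{E}[|u_\lambda(t,x)|^{p}]=\mathbb{E}[(|u_\lambda(t,x)|^{2})^{p/2}]\ge(\mathbb{E}[|u_\lambda(t,x)|^{2}])^{p/2}$. Combining this with Theorem \ref{thm-3.1-0608} and taking the infimum over $x\in[\gamma,1-\gamma]$ yields
\begin{align*}
\inf_{x\in[\gamma,1-\gamma]}\mathbb{E}[|u_\lambda(t,x)|^{p}]\;\ge\;\kappa_1^{p/2}\exp\!\Big(\tfrac{p}{2}\big(-2\pi^{2}t+\kappa_2\lambda^{4}K_L^{4}t\big)\Big).
\end{align*}
Taking logarithms, dividing by $\lambda^{4}$ and letting $\lambda\to\infty$ gives $\liminf_{\lambda\to\infty}\lambda^{-4}\log\big(\inf_{x\in[\gamma,1-\gamma]}\mathbb{E}[|u_\lambda(t,x)|^{p}]\big)\ge\tfrac{p}{2}\kappa_2K_L^{4}t$, i.e.\ the left-most inequality holds with any $c_p\le\tfrac{p}{2}\kappa_2$.

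\emph{Upper bound.} Start from the mild formulation \eqref{eq-2.3-1}. The deterministic term satisfies $|D_{1}(t,x)|\le\|u_0\|_{L^\infty}$ because $\int_0^1 g_D(t,x,y)\,dy\le1$. For the stochastic term, the Burkholder-Davis-Gundy inequality, Minkowski's integral inequality in $L^{p/2}(\Omega)$ and $(A2)$ give, with $G(t):=\sup_{x\in[0,1]}(\mathbb{E}[|u_\lambda(t,x)|^{p}])^{2/p}$,
\begin{align*}
G(t)\;\le\;2\|u_0\|_{L^\infty}^{2}+C_p\,\lambda^{2}K_U^{2}\int_0^{t}\Big(\sup_{x\in[0,1]}\int_0^1 g_D(t-s,x,y)^{2}\,dy\Big)G(s)\,ds .
\end{align*}
The relevant heat-kernel estimate is $\int_0^1 g_D(\tau,x,y)^{2}\,dy\le\int_{\mathbb{R}}p_\tau(x,y)^{2}\,dy=\tfrac{1}{2\sqrt{\pi\tau}}$, valid uniformly in $x$, which follows from the pointwise bound $0\le g_D(\tau,x,y)\le p_\tau(x,y)$, where $p_\tau$ is the heat kernel of $\tfrac12\Delta$ on $\mathbb{R}$. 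Hence $G(t)\le a+b\int_0^{t}(t-s)^{-1/2}G(s)\,ds$ with $a=2\|u_0\|_{L^\infty}^{2}$ and $b=C_p'\lambda^{2}K_U^{2}$. Iterating this inequality and using the Beta-function identity $\int_r^{t}(t-s)^{-1/2}(s-r)^{n/2-1}\,ds=B(\tfrac12,\tfrac n2)(t-r)^{(n-1)/2}$ produces
\begin{align*}
G(t)\;\le\;a\sum_{n\ge0}\frac{\big(b\,\Gamma(\tfrac12)\big)^{n}t^{n/2}}{\Gamma(\tfrac n2+1)}\;\le\;C\exp\big(C''b^{2}t\big),
\end{align*}
the series being a Mittag-Leffler function of $b\,\Gamma(\tfrac12)\sqrt{t}$, which grows like the exponential of the square of its argument. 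Therefore $\mathbb{E}[|u_\lambda(t,x)|^{p}]=G(t)^{p/2}\le C^{p/2}\exp\big(\tfrac p2 C''(C_p')^{2}\lambda^{4}K_U^{4}t\big)$ uniformly in $x\in[0,1]$; taking logarithms, dividing by $\lambda^{4}$ and letting $\lambda\to\infty$ gives the right-most inequality provided $c_p^{-1}\ge\tfrac p2 C''(C_p')^{2}$. The finiteness of $G$ that legitimizes the iteration follows from Theorem \ref{thm-1.1-0531} together with the standard moment theory for equations with Lipschitz coefficients. Taking $c_p=\min\{\tfrac p2\kappa_2,\,(\tfrac p2 C''(C_p')^{2})^{-1}\}$ then makes both inequalities hold.

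\emph{Main obstacle.} The delicate step is the upper bound, specifically seeing why the exponent is $4$ and not $2$: it is precisely the singular (non-integrable at the diagonal) factor $(t-s)^{-1/2}$ in the heat-kernel estimate that, upon iteration of the Gronwall inequality, forces the resulting series to behave like $\exp(\mathrm{const}\cdot b^{2}t)$ rather than $\exp(\mathrm{const}\cdot bt)$, and since $b\propto\lambda^{2}$ this is $\exp(\mathrm{const}\cdot\lambda^{4}t)$; a bounded kernel would only yield $\lambda^{2}$. Everything else — the Jensen step for the lower bound, the Burkholder-Davis-Gundy and Minkowski step for the upper bound, and the final matching of the two constants into a single $c_p$ — is routine bookkeeping.
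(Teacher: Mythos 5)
Your proposal is correct and follows essentially the same route as the paper: the lower bound via Jensen's inequality reducing to $p=2$ and then invoking Theorem \ref{thm-3.1-0608}, and the upper bound via Minkowski/Burkh\"older applied to the mild formulation together with the bound $\int_0^1 g_D^2(\tau,x,y)\,dy = g_D(2\tau,x,x)\le (4\pi\tau)^{-1/2}$, leading to a singular Gronwall inequality whose $(t-s)^{-1/2}$ kernel is exactly what converts $b\propto\lambda^2 K_U^2$ into the exponent $\lambda^4 K_U^4 t$. The only cosmetic difference is that the paper iterates the singular inequality once and then applies ordinary Gronwall, whereas you sum the full Mittag--Leffler series; both yield the same conclusion.
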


Then the following theorem exhibits the quantitative behavior of the noise excitability for $p$-th energy.
\begin{cor} \label{cor-1.7-0613}
Under the assumptions in Theorem \ref{thm-1.6-0610},  for all $t >0$, there  is constant $c_p$ such that 
\begin{align*}
c_p  K_{L}^4 t\leq  \liminf_{\lambda \to \infty} \lambda^{-4} K_L\log\mathcal{E}_{p} (t, \lambda) \leq 
\limsup_{\lambda \to \infty} \lambda^{-4} \log\mathcal{E}_{p} (t, \lambda)  \leq c_p^{-1} K_U^4 t.
\end{align*}
\end{cor}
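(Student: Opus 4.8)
The plan is to sandwich the $L^p(0,1)$-norm of $u_\lambda(t,\cdot)$ between pointwise $p$-th moments and then feed the result into Theorem \ref{thm-1.6-0610}, which supplies all the genuine work. The starting point is Fubini's theorem, which gives
\[
\mathcal{E}_p(t,\lambda)^p=\mathbb{E}[\|u_\lambda(t)\|_{L^p}^p]=\int_0^1\mathbb{E}[|u_\lambda(t,x)|^p]\,dx .
\]

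For the upper bound, I would simply bound this integral by $\sup_{x\in[0,1]}\mathbb{E}[|u_\lambda(t,x)|^p]$, so that $\log\mathcal{E}_p(t,\lambda)\le\frac1p\log\big(\sup_{x\in[0,1]}\mathbb{E}[|u_\lambda(t,x)|^p]\big)$. Multiplying by $\lambda^{-4}$ and letting $\lambda\to\infty$, the right-hand upper estimate of Theorem \ref{thm-1.6-0610} yields $\limsup_{\lambda\to\infty}\lambda^{-4}\log\mathcal{E}_p(t,\lambda)\le\frac1p c_p^{-1}K_U^4 t$, which is of the asserted form once the constant is renamed. For the lower bound, I would restrict the integral to $[\gamma,1-\gamma]$ and bound it below by the infimum:
\[
\mathcal{E}_p(t,\lambda)^p\ge\int_\gamma^{1-\gamma}\mathbb{E}[|u_\lambda(t,x)|^p]\,dx\ge(1-2\gamma)\inf_{x\in[\gamma,1-\gamma]}\mathbb{E}[|u_\lambda(t,x)|^p],
\]
where $1-2\gamma>\tfrac12>0$ because $\gamma\in(0,1/4)$. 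Hence $\log\mathcal{E}_p(t,\lambda)\ge\frac1p\log(1-2\gamma)+\frac1p\log\big(\inf_{x\in[\gamma,1-\gamma]}\mathbb{E}[|u_\lambda(t,x)|^p]\big)$; multiplying by $\lambda^{-4}K_L$ and letting $\lambda\to\infty$, the additive term $\frac1p\lambda^{-4}K_L\log(1-2\gamma)$ tends to $0$, and the left-hand lower estimate of Theorem \ref{thm-1.6-0610} gives $\liminf_{\lambda\to\infty}\lambda^{-4}K_L\log\mathcal{E}_p(t,\lambda)\ge\frac1p c_p K_L^4 t$.

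Finally I would reconcile the two bounds under a single constant: writing $c_p'$ for the constant furnished by Theorem \ref{thm-1.6-0610} and taking the new constant to be $c_p=c_p'/p$, one checks $c_p\le c_p'/p$ and $c_p\le p\,c_p'$ (the latter since $p\ge 1$), so this single $c_p$ validates both inequalities in the statement. The argument is entirely routine; the only point needing a little care is this bookkeeping of the $1/p$ factor so that one constant serves for both the lower and the upper estimate, and the observation that multiplicative or additive constants independent of $\lambda$ are annihilated by the $\lambda^{-4}$ normalization. There is no hard step here — all the analytic difficulty is already absorbed into Theorem \ref{thm-3.1-0608} and Theorem \ref{thm-1.6-0610}.
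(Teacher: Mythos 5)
Your proposal is correct and follows essentially the same route as the paper: Fubini plus the trivial bounds $\int_0^1\mathbb{E}[|u_\lambda(t,x)|^p]\,dx\le\sup_{x\in[0,1]}\mathbb{E}[|u_\lambda(t,x)|^p]$ and $\ge(1-2\gamma)\inf_{x\in[\gamma,1-\gamma]}\mathbb{E}[|u_\lambda(t,x)|^p]$, followed by Theorem \ref{thm-1.6-0610} (the paper routes the lower bound through the inequality \eqref{eq-1.15.-0611} from the proof of Corollary \ref{cor-1.5-0611}, which is the same content). Your explicit bookkeeping of the $1/p$ factor so that one constant serves both sides is a detail the paper glosses over, but the argument is the same.
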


\begin{rem}
$(i)$ The results in Theorem \ref{thm-1.6-0610} and Corollary \ref{cor-1.7-0613} still hold for \eqref{Spde-1.1} with Neumann boundary condition, see \cite{FoJo-14} and \cite{KhKi-15}.\\ 
$(ii)$ Analogously to \cite{KhKi-15-1}, let us introduce the noise excitation index of the solution $u_\lambda(t,x)$ relative to $p$-th energy $\mathcal{E}_{p} (t, \lambda)$.
If for each $t>0$, $$
\lim_{\lambda \to \infty} \frac{\log\log \mathcal{E}_p(t, \lambda)}{\log \lambda}
$$ exists, then its limit denoted by $e_p(t)$ is called the  noise excitation index of $p$-th energy. If furthermore,  $e_p(t)$ does not depend on $t$, then the common value denoted by $e_p$ is called the index of nonlinear noise excitation of the $p$-th energy $\mathcal{E}_{p, t} (t, \lambda)$. It is clear from the above theorem that $e_p=4$ which is independent of $p, p\in [2, \infty)$. The definition of noise excitation index is initially introduced by D. Khoshnevisan, K. Kim in \cite{KhKi-15-1} and we refer the reader to this paper for its significance.

Let us recall that for $p=2$,  M. Foondun and M. Joseph\cite{FoJo-14} proved that $e_2=4$, which improved a  result in Theorem \cite{KhKi-15} by using a renewal approach based on the short time estimate of the heat kernel. 
\end{rem}
  
    The paper is organized as follows:  In Section 2, we  give  proofs of Theorem \ref{thm-1.1-0531} and Theorem \ref{thm-1.2-0608} based on some lemmas. In Section 3, we  first state a lower bound of the global time estimate for the heat kernel and then prove our important result, Theorem \ref{thm-3.1-0608}. 
Proofs of Theorem \ref{thm-1.3-0608}  and Theorem \ref{thm-1.6-0610} and their corollaries are formulated in Section 4 and Section 5 respectively.  In the end, for the reader's convenience, we write down a version of Garsia-Rodemich-Rumsey theorem in Appendix, which is cited in Section 2.
\section{Proof of Theorem \ref{thm-1.1-0531} and Theorem \ref{thm-1.2-0608} }
Since we consider the mild solution $u_\lambda(t,x)$ of \eqref{Spde-1.1} with homegeneous Dirichlet boundary condition, most of our calculations depend on various estimates of $g_D(t,x,y)$. We will  recall necessary  properties of $g_D(t,x, y)$ when they are required. Firstly, by the spectral theory, it is well-known that 
\begin{align}\label{eq-2.1}
g_D(t,x,y) =2\sum_{n=1}^\infty e^{-(n\pi)^2 t}\sin(n\pi x)\sin(n\pi y),\ x, y\in [0, 1].
\end{align}
It is also easy to know that  $t>0$ and $x, y \in [0, 1]$, 
\begin{align}\label{eq-2.2}
0\leq g_D(t,x,y) \leq g(t,x,y)
\end{align}
where $g(t,x,y)$ denotes the transition probability density of some one-dimensional standard Brownian motion.

Different from the study of the long time behaviour of $\mathbb{E}[|u_\lambda(t,x)|^p]$ in \cite{FoNu}, to study that of $\mathbb{E}[\|u(t)\|_{L^\infty}^p]$, the estimate of the derivative of $g_D(t,x,y)$ is vital, that is
\begin{align}\label{eq-2.2-1}
| \partial_xg_D(t,x, y)| \leq K_1 t^{-1}e^{-K_2\frac{(x-y)^2}{t}}. 
\end{align}
where $K_1$ and $ K_2$ are two generic positive constants.

Before we state the proof of Theorem \ref{thm-1.1-0531}, we will  give some lemmas.
Let us first formulate the famous Kolmogorov's regularity theorem with its brief proof for  the reader's convenience and our purpose.
\begin{lem}(Kolmogorov's regularity theorem)\label{lem-2.2}
Let $\{u(x)\}_{x\in [0, 1]}$ be a real valued stochastic process.  If there exist $p\geq 1$ and positive constants $K, \delta$ such that 
\begin{align}\label{eq-2.5}
\mathbb{E}[|u(x)-u(y)|^p] \leq K|x-y|^{1+\delta}.
\end{align} 
then we have that for each $\epsilon \in (0, \min\{\delta, 1\})$, 
there exists a positive constant $\kappa$ depending only on $p, \delta, \epsilon$ such that
\begin{align}\label{eq-2.7}
|u(x) - u(y)| \leq \kappa B^{1/p}|x-y|^{\frac{\delta- \epsilon}{p}},
\end{align}
where  $B=B(\epsilon, \delta)$ is the positive random variable defined by 
\begin{align}\label{eq-2.6}
B=\int_0^1\int_0^1\frac{|u(x)-u(y)|^p}{|x-y|^{2+\delta -\epsilon}}dxdy. 
\end{align}
In particular, the stochastic process $\{u(x)\}_{x\in [0, 1]}$ has a $\frac{\delta- \epsilon}{p}$-H\"{o}lder continuous modification.
\end{lem}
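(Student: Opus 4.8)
The plan is to obtain \eqref{eq-2.7} pathwise from the Garsia--Rodemich--Rumsey (GRR) inequality recorded in the Appendix, and then to use the moment hypothesis \eqref{eq-2.5} only to guarantee that the random variable $B$ is almost surely finite (and integrable), which is what makes the pathwise bound meaningful. First I would apply GRR with the Young function $\Psi(z)=|z|^p$ (convex, since $p\geq 1$) and the gauge $q(u)=u^{(2+\delta-\epsilon)/p}$ (continuous, strictly increasing, $q(0)=0$, since $2+\delta-\epsilon>0$). With these choices the double integral entering the hypothesis of GRR is exactly
\begin{align*}
\int_0^1\int_0^1 \Psi\Big(\frac{|u(x)-u(y)|}{q(|x-y|)}\Big)\,dx\,dy=\int_0^1\int_0^1\frac{|u(x)-u(y)|^p}{|x-y|^{2+\delta-\epsilon}}\,dx\,dy=B,
\end{align*}
so GRR gives, for all $x,y\in[0,1]$,
\begin{align*}
|u(x)-u(y)|\leq 8\int_0^{|x-y|}\Psi^{-1}\Big(\frac{4B}{v^2}\Big)\,dq(v).
\end{align*}
Since $\Psi^{-1}(w)=w^{1/p}$ and $dq(v)=\tfrac{2+\delta-\epsilon}{p}\,v^{(2+\delta-\epsilon)/p-1}\,dv$, the integrand on the right is a constant times $B^{1/p}\,v^{(\delta-\epsilon)/p-1}$; as $(\delta-\epsilon)/p-1>-1$ the integral converges and equals $\tfrac{p}{\delta-\epsilon}|x-y|^{(\delta-\epsilon)/p}$. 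Collecting the constants yields \eqref{eq-2.7} with $\kappa=\kappa(p,\delta,\epsilon)=8\cdot 4^{1/p}\,\tfrac{2+\delta-\epsilon}{\delta-\epsilon}$, which depends only on $p,\delta,\epsilon$ as claimed.

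Next I would check that the right-hand side of \eqref{eq-2.7} is finite: by Tonelli's theorem and \eqref{eq-2.5},
\begin{align*}
\mathbb{E}[B]=\int_0^1\int_0^1\frac{\mathbb{E}[|u(x)-u(y)|^p]}{|x-y|^{2+\delta-\epsilon}}\,dx\,dy\leq K\int_0^1\int_0^1|x-y|^{\epsilon-1}\,dx\,dy<\infty,
\end{align*}
since $\epsilon>0$. Hence $B\in L^1$ and in particular $B<\infty$ on an event of full probability, on which \eqref{eq-2.7} holds.

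The only genuinely delicate point — and thus the step I expect to require the most care — is that $u$ is a priori merely (jointly) measurable, whereas the sharp pointwise statement needs the GRR bound to control increments everywhere. I would resolve this by the standard regularization argument: apply the estimate to the spatial mollifications $u_n=u*\rho_{1/n}$, which are continuous; by Jensen's inequality the associated quantity $B_n$ is dominated by $B$, so GRR gives $|u_n(x)-u_n(y)|\leq \kappa B^{1/p}|x-y|^{(\delta-\epsilon)/p}$ uniformly in $n$ on $\{B<\infty\}$, making $\{u_n\}$ equicontinuous there; since $u_n\to u$ in $L^p([0,1])$, hence a.e. along a subsequence, the locally uniform limit $\tilde u$ is a continuous modification of $u$ satisfying \eqref{eq-2.7}. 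Equivalently one may run the bound on a countable dense set $D\subset[0,1]$, where it forces $u|_D$ to be uniformly $\tfrac{\delta-\epsilon}{p}$-H\"older on $\{B<\infty\}$ and hence to extend uniquely to a continuous function on $[0,1]$; this extension is the desired modification. Finally, because $\tfrac{\delta-\epsilon}{p}>0$, the modification is $\tfrac{\delta-\epsilon}{p}$-H\"older continuous, which proves the last assertion.
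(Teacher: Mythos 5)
Your proposal is correct and follows essentially the same route as the paper: apply Garsia--Rodemich--Rumsey with $\Phi(z)=|z|^p$ and $\phi(u)=u^{(2+\delta-\epsilon)/p}$, evaluate the resulting integral to get the H\"older bound with a constant depending only on $p,\delta,\epsilon$, and use \eqref{eq-2.5} to check $\mathbb{E}[B]<\infty$. Your additional discussion of passing from the a.e. GRR bound on a merely measurable process to a genuine continuous modification is a point the paper glosses over, and is a welcome (correct) refinement rather than a different method.
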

\begin{proof}
This lemma is a modification of  Corollary 1.2 \cite{Wal}.  Similarly, we state its proof briefly using the celebrated  analytic inequality introduced by Garsia, Rodemich and Rumsey, see the original research paper \cite{GRR}, Theorem 1.1 \cite{Wal} or  Theorem \ref{GRR} in Appendix.
Let us  consider $\Phi(x)=|x|^p,\  x\in \mathbb{R}$ and  $\phi(x)=|x|^{\frac{2+\delta -\epsilon}{p}},\  x\in [0,1]$. It is clear that the two functions  $\Phi(x)$ and $\phi(x)$ satisfy the conditions in  Theorem \ref{GRR}.
We can now rewrite the random variable $B$ defined by \eqref{eq-2.6} as 
\begin{align*}
B=\int_0^1\int_0^1 \Phi\Big(\frac{u(x)-u(y)}{\phi(|x-y|) } \Big)dxdy.
\end{align*}
Taking the expectation of $B$, and combining with the condition \eqref{eq-2.5} on $u(x)$, we can easily arrive at 
\begin{align}\label{eq-2.14}
\mathbb{E}[B] &=K\int_0^1\int_0^1{|x-y|^{-1 +\epsilon}}dxdy 
= \frac{2K}{\epsilon(1+\epsilon)}<\infty, 
\end{align}
where  $\epsilon \in (0, \min\{\delta, 1\} )$ has been used.
Thus, we can apply the Garsia-Rodemich-Rumsey Theorem, see Theorem \ref{GRR}.  Using Theorem \ref{GRR} and the  integration by parts, we obtain that 
\begin{align*}
|u(x)-u(y)| &\leq 8\int_0^{|x-y|} B^{\frac{1}{p}}u^{-\frac{2}{p}}d(u^{\frac{2+\delta -\epsilon}{p}})  \\
& 
=8B^{\frac1p} [|x-y|^{\frac{\delta -\epsilon }{p}} + \frac2p \int_0^{|x-y|} u^{\frac{\delta -\epsilon}{p}-1}du] \\
& =8B^{\frac1p}(1+(\delta -\epsilon)^{-1} )|x-y| ^{\frac{\delta -\epsilon }{p}}. 
\end{align*}
Taking $\kappa= 8(1+(\delta -\epsilon)^{-1} )$,  the estimate of \eqref{eq-2.7} is proved. Finally, the existence of $\frac{\delta- \epsilon}{p}$-H\"{o}lder continuous modification for the stochastic process $\{u(x)\}_{x\in [0, 1]}$ is obvious from  \eqref{eq-2.6}. Consequently, our proof is completed.
\end{proof}

\begin{rem}
As we said in the above proof, we mainly imitated the approach for Corollary 1.2\cite{Wal}. So it may be considered that our proof is insignificant. In fact, the proofs for Theorem \ref{thm-1.1-0531} and Theorem \ref{thm-1.2-0608} essentially depends on the estimates of $K$ and $B$ relative to the solution of $u_\lambda(t,x)$. However, we can not know the concrete form of the right hand side of \eqref{eq-2.7}, if we do not read the proof of  Corollary 1.2\cite{Wal} carefully. On the other hand, although the similar result to \eqref{eq-2.7} in Corollary 1.2\cite{Wal} is  better that \eqref{eq-2.7} , our result is concise and easy to be applied.
\end{rem}

\begin{lem}\label{lem-2.1-0531}
Assume $\alpha \in (0, 1)$. Then there exists a constant $C>0$ depending on $\alpha$ such that for any $\beta<0$
\begin{align*}
\sup_{t\geq 0,x\in [0, 1] } \int_0^t  e^{\beta s}s^{-\alpha} \int_0^1|g_D(s, x, y)|^{2-\alpha}dyds \leq C |\beta|^{\frac{\alpha-1}{2}}.
\end{align*} 
\end{lem}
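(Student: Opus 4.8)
The plan is to estimate the spatial integral $\int_0^1 |g_D(s,x,y)|^{2-\alpha}\,dy$ first, and then carry out the time integration, tracking the dependence on $\beta$. For the spatial part, I would use the Gaussian upper bound \eqref{eq-2.2}, namely $0 \le g_D(s,x,y) \le g(s,x,y)$, where $g(s,x,y)$ is a Brownian transition density, so $g(s,x,y) \le C s^{-1/2} e^{-c(x-y)^2/s}$. Raising this to the power $2-\alpha$ gives $|g_D(s,x,y)|^{2-\alpha} \le C s^{-(2-\alpha)/2} e^{-c(2-\alpha)(x-y)^2/s}$, and integrating in $y$ over $[0,1] \subset \mathbb{R}$ produces a factor bounded by $C s^{1/2}$ from the Gaussian integral. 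Hence $\int_0^1 |g_D(s,x,y)|^{2-\alpha}\,dy \le C s^{-(2-\alpha)/2 + 1/2} = C s^{-(1-\alpha)/2}$, uniformly in $x \in [0,1]$.

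Next I would insert this bound and reduce the claim to a one-dimensional integral: the left-hand side is at most
\begin{align*}
C \sup_{t \ge 0} \int_0^t e^{\beta s} s^{-\alpha} s^{-(1-\alpha)/2}\,ds
= C \int_0^\infty e^{\beta s} s^{-\alpha - (1-\alpha)/2}\,ds
= C \int_0^\infty e^{-|\beta| s} s^{-(1+\alpha)/2}\,ds,
\end{align*}
using $\beta < 0$ and extending the upper limit to $\infty$ (the integrand is nonnegative). The exponent $-(1+\alpha)/2$ satisfies $-(1+\alpha)/2 > -1$ since $\alpha < 1$, so the integral converges at $0$, and the exponential decay handles the tail. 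A change of variables $r = |\beta| s$ then gives $\int_0^\infty e^{-|\beta| s} s^{-(1+\alpha)/2}\,ds = |\beta|^{(1+\alpha)/2 - 1} \int_0^\infty e^{-r} r^{-(1+\alpha)/2}\,dr = \Gamma\!\left(\tfrac{1-\alpha}{2}\right) |\beta|^{(\alpha-1)/2}$, which is exactly the claimed power of $|\beta|$, with the constant $C$ absorbing $\Gamma\!\left(\tfrac{1-\alpha}{2}\right)$ and the earlier Gaussian constants.

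The only point requiring a little care — and the step I would flag as the main obstacle, though it is minor — is making the Gaussian spatial integral bound genuinely uniform in $x \in [0,1]$ and in $s$; this is immediate because extending the $y$-integral from $[0,1]$ to all of $\mathbb{R}$ only increases it, and $\int_{\mathbb{R}} e^{-c(2-\alpha)(x-y)^2/s}\,dy = \sqrt{\pi s / (c(2-\alpha))}$ is independent of $x$. One should also confirm the integrability of $s^{-(1+\alpha)/2}$ near $s=0$, i.e. that $(1+\alpha)/2 < 1$, which holds precisely under the hypothesis $\alpha \in (0,1)$; this is where the restriction on $\alpha$ is used. No stochastic input is needed here at all — the lemma is a deterministic estimate on the Dirichlet heat kernel — so the proof is essentially a careful bookkeeping of constants and exponents.
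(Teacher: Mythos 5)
Your proposal is correct and follows essentially the same route as the paper: dominate $g_D$ by the free Gaussian kernel via \eqref{eq-2.2}, extend the spatial integral to $\mathbb{R}$ to get the bound $\int_0^1|g_D(s,x,y)|^{2-\alpha}\,dy \le C s^{(\alpha-1)/2}$ uniformly in $x$, and then evaluate $\int_0^\infty e^{-|\beta|s}s^{-(1+\alpha)/2}\,ds$ by the substitution $r=|\beta|s$ to produce $\Gamma\bigl(\tfrac{1-\alpha}{2}\bigr)|\beta|^{(\alpha-1)/2}$. The exponent bookkeeping and the use of $\alpha<1$ for integrability at $s=0$ match the paper's argument exactly.
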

\begin{proof}
This proof is very easy by noting t that the $g_D(t,x,y) \leq g(t,x,y)$. 
In fact, by \eqref{eq-2.2} and the conditions  $\alpha \in(0, 1)$ and $\beta<0$, we have
\begin{align*}
& \int_0^t e^{\beta s}s^{-\alpha} \int_0^1 |g_D(s, x, y)|^{2-\alpha}dy ds\\
\leq&
\int_0^t e^{\beta s}s^{-\alpha} \int_{\mathbb{R}} |g(s, x, y)|^{2-\alpha}dy  ds\notag\\
=& (2\pi)^{\frac{\alpha -1}{2}}(2-\alpha)^{-\frac12}
   \int_0^t e^{\beta s}s^{-\frac{1+\alpha}{2}}ds \notag\\ 
\leq & (2\pi)^{\frac{\alpha -1}{2}} 
         (2-\alpha)^{-\frac12} |\beta|^{\frac{\alpha-1}{2}}
   \int_0^{\infty} e^{-s}s^{-\frac{1+\alpha}{2}}ds \notag \\
= & (2\pi)^{\frac{\alpha -1}{2}} 
         (2-\alpha)^{-\frac12} \Gamma( \frac{1-\alpha}{2}) |\beta|^{\frac{\alpha-1}{2}}, \notag
\end{align*}
where $\Gamma$ denotes the Gamma function. Thus, we can conclude the proof.
\end{proof}

For any $u\in B_{p, \beta}$, let us define the mapping $Su(t,x)$   by  the  stochastic convolution
\begin{align*}
Su(t,x) =\int_{0}^t\int_0^t g_D(t-s,x, y)u(s, y)w(dsdy).
\end{align*}
In the following two lemmas, we will show that  $S$ maps $B_{p, \beta}$ to itself and it is contractive for some $\beta$, respectively.

\begin{lem}\label{lem-2.3-0601}
Assume $\beta<0$ and $p>2$. Then for each $\alpha\in (2/p,1)$, there exists a constant 
$C=C(p, \alpha)>0$ independent of $\beta$ such for all $u\in B_{p, \beta}$, 
\begin{align}\label{eq-2.12-0601}
\|Su\|_{p,\beta}^p \leq   C \|u\|_{p, \beta}^p (|\beta|^{\frac{-p(1-\alpha)}{4}}+ |\beta|^{-p/4})
\end{align}
\end{lem}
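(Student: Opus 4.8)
\textbf{Proof proposal for Lemma \ref{lem-2.3-0601}.}

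The plan is to estimate the $p$-th moment of $Su(t,x)$ by combining the Burkholder--Davis--Gundy inequality for stochastic integrals against space-time white noise with a H\"older-type interpolation on the heat kernel, and then to control the resulting deterministic space-time integral using Lemma \ref{lem-2.1-0531}. First I would fix $\alpha\in(2/p,1)$ and apply the BDG inequality (in the form valid for Walsh integrals) to get, for each $(t,x)$,
\begin{align*}
\mathbb{E}\big[|Su(t,x)|^p\big]\leq C_p\, \mathbb{E}\Big[\Big(\int_0^t\int_0^1 g_D(t-s,x,y)^2\,|u(s,y)|^2\,dy\,ds\Big)^{p/2}\Big].
\end{align*}
The idea is then to split the kernel factor as $g_D(t-s,x,y)^2 = g_D(t-s,x,y)^{2-\alpha}\cdot g_D(t-s,x,y)^{\alpha}$ and to absorb the troublesome $(t-s)$-singularity: using \eqref{eq-2.2} one has the pointwise bound $g_D(t-s,x,y)^\alpha \leq C (t-s)^{-\alpha/2}$ (from the Gaussian upper bound, since $\sup_y g(r,x,y)\leq C r^{-1/2}$), so that
\begin{align*}
\int_0^t\int_0^1 g_D(t-s,x,y)^2 |u(s,y)|^2 dy\,ds \leq C\int_0^t (t-s)^{-\alpha/2} \Big(\int_0^1 g_D(t-s,x,y)^{2-\alpha}|u(s,y)|^2 dy\Big)ds.
\end{align*}

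Next I would bring in the weight. Write $|u(s,y)|^2 = e^{-2\beta s/p}\big(e^{2\beta s/p}|u(s,y)|^2\big)$ and note $\|u(s)\|_{L^\infty}^2 \le e^{-2\beta s/p}\|u\|_{p,\beta}^{2}\cdot(\text{a.s. bound is in }L^{p/2})$; more precisely, after taking the $(p/2)$-th power and using Minkowski's integral inequality in the space-time variable $(s,y)$ with the finite measure $\mu(ds\,dy)= (t-s)^{-\alpha/2} g_D(t-s,x,y)^{2-\alpha}\,dy\,ds$, one reduces to
\begin{align*}
\mathbb{E}\big[|Su(t,x)|^p\big]\leq C_p\Big(\int_0^t (t-s)^{-\alpha/2}\!\!\int_0^1 g_D(t-s,x,y)^{2-\alpha}\big(\mathbb{E}[\|u(s)\|_{L^\infty}^p]\big)^{2/p} dy\,ds\Big)^{p/2}.
\end{align*}
Inserting $\mathbb{E}[\|u(s)\|_{L^\infty}^p]\le e^{-\beta s}\|u\|_{p,\beta}^p$ and substituting $r=t-s$ gives a factor $e^{\beta(r-t)\cdot(2/p)}$ inside; multiplying through by $e^{\beta t}$ and taking $\sup_{t,x}$ leaves exactly
\begin{align*}
e^{\beta t}\,\mathbb{E}\big[|Su(t,x)|^p\big]\leq C_p\,\|u\|_{p,\beta}^p\Big(\sup_{t,x}\int_0^t e^{\beta r\cdot(2/p)} r^{-\alpha/2}\int_0^1 g_D(r,x,y)^{2-\alpha}\,dy\,dr\Big)^{p/2}.
\end{align*}
One then would like to compare $r^{-\alpha/2}$ with $r^{-\alpha}$; since these differ and $r^{-\alpha/2}\le r^{-\alpha}$ only fails for $r>1$, I would instead split the $r$-integral at $r=1$, handle $r\le 1$ by $r^{-\alpha/2}\le r^{-\alpha}$ and Lemma \ref{lem-2.1-0531} (with $\beta$ there replaced by $2\beta/p<0$), and handle $r>1$ by the crude bound $\int_0^1 g_D(r,x,y)^{2-\alpha}dy\le C r^{-(1-\alpha)/2}$ together with $\int_1^\infty e^{2\beta r/p} r^{-\alpha/2-(1-\alpha)/2}dr\le C|\beta|^{-1/2+\alpha/2}\cdot$const. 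Collecting: the inner $\sup$ is bounded by $C(|\beta|^{(\alpha-1)/2}+|\beta|^{-1/2})$, so raising to the power $p/2$ yields $C(|\beta|^{-p(1-\alpha)/4}+|\beta|^{-p/4})$, which is \eqref{eq-2.12-0601}.

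The main obstacle I anticipate is the bookkeeping around the time singularity: one must choose the exponent split of $g_D^2$ so that \emph{both} the leftover power of $(t-s)$ is integrable near $0$ \emph{and} the remaining spatial integral $\int_0^1 g_D^{2-\alpha}dy$ is finite, while simultaneously keeping the negative exponential weight $e^{2\beta r/p}$ matched correctly through Minkowski's inequality (the exponent $2/p$ rather than $1$ appears precisely because the weight sits inside an $L^{p/2}$-norm). Getting the dependence on $\beta$ to come out as stated — in particular that the constant $C$ is \emph{independent} of $\beta$ — hinges on doing all the $\beta$-scaling via the substitution inside Lemma \ref{lem-2.1-0531} rather than by any estimate that hides a $\beta$-dependent constant; care is also needed for $r>1$, where one cannot simply invoke Lemma \ref{lem-2.1-0531} with exponent $-\alpha/2$ and must use the direct Gaussian bound instead.
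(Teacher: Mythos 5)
There is a genuine gap: the norm $\|Su\|_{p,\beta}^p$ is defined as $\sup_{t\geq 0}\mathbb{E}\bigl[e^{\beta t}\|Su(t)\|_{L^\infty}^p\bigr]$, i.e.\ the supremum over $x\in[0,1]$ sits \emph{inside} the expectation, whereas your argument only produces a bound on $\sup_{t,x}e^{\beta t}\,\mathbb{E}\bigl[|Su(t,x)|^p\bigr]$, with the supremum over $x$ taken \emph{outside}. These are not comparable quantities for a random field, and passing from the second to the first is precisely the hard part of the lemma. Your BDG--Minkowski--interpolation computation is essentially the paper's estimate of the single term $\mathbb{E}[|Su(t,y_0)|^p]$ at one fixed spatial point (and your bookkeeping there is correct, yielding the $|\beta|^{-p/4}$-type contribution), but it cannot by itself control $\mathbb{E}[\sup_x|Su(t,x)|^p]$.

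What the paper does in addition, and what is missing from your proposal, is a quantitative Kolmogorov/Garsia--Rodemich--Rumsey step: one first bounds the spatial increments $\mathbb{E}[|Su(t,x)-Su(t,y)|^p]\leq K_1(t)|x-y|^{\alpha p/2}$ using the mean value theorem together with the gradient bound $|\partial_x g_D(t,x,y)|\leq K_1t^{-1}e^{-K_2(x-y)^2/t}$ (this is where the restriction $\alpha\in(2/p,1)$, i.e.\ $\alpha p>2$, is actually needed), then invokes Lemma \ref{lem-2.2} to get the pathwise bound $\sup_x|Su(t,x)|\leq \kappa B(t)^{1/p}+|Su(t,y_0)|$ with a constant $\kappa$ independent of $t$, and finally estimates $\mathbb{E}[B(t)]$ via $K_1(t)$ and Lemma \ref{lem-2.1-0531}; it is this term that produces the $|\beta|^{-p(1-\alpha)/4}$ contribution. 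Your proof would need to be supplemented by such a uniform-in-$x$ argument (or by some other factorization/maximal inequality) before it establishes \eqref{eq-2.12-0601}; note also that your route never uses the condition $\alpha>2/p$, which is a signal that the statement being proved is weaker than the one claimed.
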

\begin{proof}
The key point of this proof is the application of Lemma \ref{lem-2.2}. 
By Burkh\"older's inequality, for any $x, y\in [0, 1]$ and $t\geq 0$
\begin{align*}
& \mathbb{E} [|Su(t,x)(t,x) -S(t,y)|^p]\\
& \leq \kappa^p(p) \mathbb{E}  \left[ \left(\int_0^t \int_{0}^1 (g(t-s, x, z)-g(t-s,y, z))^2 u^2(s,z))dzds \right)^{p/2} \right]\notag
\end{align*}
where $\kappa(p)$ denotes the optimal constant in Burkh\"{o}lder's $L^p(\Omega)$-inequality for continuous square-integrable martingales, see \cite{Dav-76}.
Using the continuous version of Minkowski's inequality, see, for example, Theorem 6.2.7 \cite{Str-11}, and the above estimate, we have that 
 \begin{align*}
& \mathbb{E}[|Su(t,x) -Su(t,y)|^p]^{2/p}\\
& \leq \kappa^p(p) \left( \int_0^t \int_{0}^1 (g(t-s, x, z)-g(t-s,y, z))^2  \mathbb{E}[ |u(s,z)|^p]^{2/p}dzds \right)^{p/2}. \notag
\end{align*}
Applying the mean value theorem and \eqref{eq-2.2-1}, we obtain that
for each $\alpha \in (0,1)$ 
\begin{align}\label{eq-2.18-0612}
& \mathbb{E}[|Su(t,x) -Su(t,y)|^p]\\
& \leq \kappa^p(p)  \left(\int_0^t \int_{0}^1 \Big|\int_0^1 \partial_x g_D(t-s,x+\theta(x-y), z)d\theta \Big|^\alpha\right. \notag \\
&\qquad  \times  |g(t-s, x, z)-g(t-s, y, z)|^{2-\alpha}    \mathbb{E}[ |u(s,z)|^p]^{2/p}dzds \Big)^{p/2} |x-y|^{\alpha p/2} \notag \\
& := K_1(t)|x-y|^{\alpha p/2} \notag
\end{align}
From now, we will assume that $\alpha p>2$. We point out that because of $p>2$,  it is possible for us to choose $\alpha \in (\frac2p, 1)$ such that  $\alpha p>2$.
Applying Lemma \ref{lem-2.2} to $Su(t,x)$, we deduce from 
\eqref{eq-2.18-0612} that for all $t\geq 0$ and all $x, y \in [0, 1]$
\begin{align}\label{eq-2.13-0612}
|Su(t,x) -Su(t,y)| \leq  \kappa B(t)^{\frac{1}{p}}
|x-y|^{\frac{\alpha}{2} -\frac{1+\epsilon}{p}}, 
\end{align}
where $\kappa$ is same as that in \eqref{eq-2.7} in Lemma \ref{lem-2.2},
\begin{align*}
B(t)=\int_0^1\int_0^1\frac{|Su(t, x)-Su(t, y)|^p}{|x-y|^{1+\alpha p/2 -\epsilon}}dxdy
\end{align*}
and $\epsilon\in (0, \min\{\alpha p/2-1, 1\}).$
 It is valuable to point out that the constant $\kappa$ appeared in  \eqref{eq-2.13-0612} does not depend on time $t$, which is very important for our goal.
Let $y=y_0\in [0, 1]$ be fixed. Then from  \eqref{eq-2.13-0612} and noting that $\frac{\alpha}{2} - \frac{1+ \epsilon}{p}>0$, we can deduce that for any $t \geq 0$ and $x\in [0, 1]$
\begin{align*}
|Su(t,x)| \leq  \kappa B(t)^{1/p}
|x|^{\frac{\alpha}{2} - \frac{1+\epsilon}{p}} + |Su(t, y_0)|, 
\end{align*}
which implies that  for any $t>0$
\begin{align*}
\sup_{x\in [0, 1]} |Su(t,x)| \leq  \kappa B(t)^{\frac{1}{p}} + |Su(t, y_0)|,
\end{align*}
Let us now take the $p$-th moments of the both sides of the above inequality and use the inequality $|a+b|^p \leq 2^{p-1}(|a|^p +|b|^p),\ a,b \in \mathbb{R}$, we obtain that 
\begin{align}\label{eq-2.16-0601}
\mathbb{E}\Big[\sup_{x\in [0, 1]} |Su(t,x)|^p \Big]\leq  2^{p-1}(\kappa^p \mathbb{E}[B(t)] + \mathbb{E}[|Su(t, y_0)|^p]).
\end{align}
By \eqref{eq-2.18-0612} and analogously to  \eqref{eq-2.14}, we know that  there is a positive constant $C_1$ depending  on $\epsilon$ such that  for all $t \geq 0$,
\begin{align}\label{eq-2.14-0615}
 \mathbb{E}[B(t)] \leq C_1K_1(t).
\end{align}
Recalling that $K_1(t)$ is defined in \eqref{eq-2.18-0612}.
Let us now pay attention to the estimate of $K_1(t)$. 
From \eqref{eq-2.2-1}, it follows that 
$$
\Big|\int_0^1 \partial_x g_D(t-s,x+\theta(x-y), z)d\theta \Big| \leq  K_1t^{-1}.
$$
Thus, by the definition of $K_1(t)$, see \eqref{eq-2.18-0612} above and  Lemma \ref{lem-2.1-0531}, it follows that   for all $t \geq 0$ and any $x, y \in [0, 1]$
\begin{align}\label{eq-2.18-0601}
K_1(t)
\leq & \kappa^p(p) C_2  \Big( \int_0^t \int_{0}^1 (t-s)^\alpha |g(t-s, x,z)-g(t-s,y, z)|^{2-\alpha}   \\
&\qquad  \times  \mathbb{E}[ \|u(s)\|_{L^\infty}^p]^{2/p}dzds\Big)^{p/2} \notag \\
& =  \kappa^p(p) C_2 \Big( \int_0^t \int_{0}^1 (t-s)^\alpha |g(t-s, x,z)-g(t-s,y, z)|^{2-\alpha}  \notag \\
&\qquad  \times\big(e^{\beta(t-s)}  e^{-\beta(t-s)} \mathbb{E}[ \|u(s)\|_{L^\infty}^p]\big)^{2/p} dzds\Big)^{p/2} \notag \\
& \leq   \kappa^p(p) C_2e^{-\beta t}  \|u\|_{p, \beta}^p \Big( \int_0^t \int_{0}^1e^{\frac{2\beta s}{p}}  s^\alpha   |g(s, x,z)-g(s,y, z)|^{2-\alpha} dzds\Big)^{p/2} \notag \\
&\leq \kappa^p(p) C_3 \ e^{-\beta t}  \|u\|_{p, \beta}^p |\beta|^{\frac{p(\alpha-1)}{4}}, \notag
\end{align}
where $u \in B_{p, \beta}$ has  been used for the third line, and $C_2, \ C_3$ are two generic constants depending on $p$ and $\alpha$. 

We can more easily give the  estimate of the term $\mathbb{E}[Su(t, y_0)|^p])$ in \eqref{eq-2.16-0601}.
In fact,  similarly to \eqref{eq-2.18-0601}, by using Minkowski's inequality, Burkh\"older's inequality and  the semigroup property $\int_0^1 g_D^2(s, y_0, y)dy =g_D(2s,y_0, y_0)$, we deduce that 
\begin{align}\label{eq-2.19-0601}
\mathbb{E}[Su(t, y_0)|^p]) &\leq 
\kappa^p(p) \left(\int_0^t \int_{0}^1 g_D^2(t-s,y_0, y)
 \mathbb{E}[|u_\lambda(s,y)|^p]^{2/p}dyds \right)^{p/2}  \\
& \leq  \kappa^p(p)  e^{-\beta t} \left(\int_0^t \int_{0}^1 g_D^2(t-s, y_0, y)e^{\frac{2\beta(t-s)}{p}}
\mathbb{E}[e^{\beta s}\|u(s)\|^p]^{2/p}dyds \right)^{p/2}  \notag \\
& \leq  \kappa^p(p) e^{-\beta t} \|u\|_{p, \beta}^p  \left(\int_0^t  g_D(2s, y_0, y_0)e^{\beta s}ds \right)^{p/2} 
\notag \\
& \leq  \kappa^p(p) e^{-\beta t} \|u\|_{p, \beta}^p  \left(\int_0^\infty  (4\pi s)^{-1/2}e^{\beta  s }ds \right)^{p/2}  \notag
\\
& \leq  \kappa^p(p)  e^{-\beta t} \|u\|_{p, \beta}^p  
|\beta|^{-p/4}\left((4\pi )^{-1/2} \Gamma(\frac12) \right)^{p/2} \notag
\\
& =  2^{-p/2}\kappa^p(p)  e^{-\beta t} \|u\|_{p, \beta}^p  
|\beta|^{-p/4}. \notag
\end{align}

 Consequently, plugging \eqref{eq-2.18-0601} and \eqref{eq-2.19-0601} into \eqref{eq-2.16-0601}, we deduce that there exists a constant $C_4= C_4(p, \alpha)>0$ such that 
\begin{align*}
\mathbb{E}\Big[\sup_{x\in [0, 1]} |Su(t,x)|^p \Big]\leq C_4 e^{-\beta t} \|u\|_{p, \beta}^p (|\beta|^{\frac{-p(1-\alpha)}{4}}+ |\beta|^{-p/4}).
\end{align*}
Multiplying both sides of the above inequality  by $e^{\beta t}$and  then taking the supremum for $t\geq 0$, we go to
\begin{align*}
\|Su\|_{p, \beta}^p\leq C_4  \|u\|_{p, \beta}^p (|\beta|^{\frac{-p(1-\alpha)}{4}}+ |\beta|^{-p/4}),
\end{align*}
which completes the proof of  \eqref{eq-2.12-0601}. 
\end{proof}

\begin{lem}\label{lem-2.4-0601}
Assume $\beta<0$ and $p>2$. Then for each $\alpha\in (2/p,1)$, there exists a constant 
$C=C(p, \alpha)>0$ independent of $\beta$ such for any $u, v \in B_{p, \beta}$, 
\begin{align*}
\|Su-Sv\|_{p,\beta}^p \leq   C \|u-v\|_{p, \beta}^p (|\beta|^{\frac{-p(1-\alpha)}{4}}+ |\beta|^{-p/4})
\end{align*}
\end{lem}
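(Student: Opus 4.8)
The plan is to deduce Lemma \ref{lem-2.4-0601} from Lemma \ref{lem-2.3-0601} by exploiting the linearity of the stochastic convolution $S$, so that no new estimate is needed. Since the Walsh integral is linear in its integrand, one has, for every $t\geq 0$ and $x\in[0,1]$,
\begin{align*}
Su(t,x)-Sv(t,x)=\int_0^t\int_0^1 g_D(t-s,x,y)\big(u(s,y)-v(s,y)\big)\,w(ds\,dy)=S(u-v)(t,x)
\end{align*}
almost surely (after fixing jointly continuous modifications of the fields involved). Moreover $(B_{p,\beta},\|\cdot\|_{p,\beta})$ is a Banach space, hence a vector space, so $h:=u-v\in B_{p,\beta}$ whenever $u,v\in B_{p,\beta}$. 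Applying Lemma \ref{lem-2.3-0601} to $h$ then gives
\begin{align*}
\|Su-Sv\|_{p,\beta}^p=\|S(u-v)\|_{p,\beta}^p\leq C\,\|u-v\|_{p,\beta}^p\big(|\beta|^{\frac{-p(1-\alpha)}{4}}+|\beta|^{-p/4}\big)
\end{align*}
with the same constant $C=C(p,\alpha)>0$, independent of $\beta$, which is exactly the assertion.

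Should a fully self-contained argument be preferred, I would instead repeat verbatim the three steps in the proof of Lemma \ref{lem-2.3-0601} --- Burkholder's inequality, the continuous Minkowski inequality, and the mean value theorem combined with the gradient bound \eqref{eq-2.2-1} followed by Lemma \ref{lem-2.2} --- after noting the martingale representation
\begin{align*}
\big(Su(t,x)-Sv(t,x)\big)-\big(Su(t,y)-Sv(t,y)\big)=\int_0^t\int_0^1\big(g_D(t-s,x,z)-g_D(t-s,y,z)\big)\big(u(s,z)-v(s,z)\big)\,w(ds\,dz).
\end{align*}
The only change is that the factor $u^2(s,z)$ in the Burkholder bound of Lemma \ref{lem-2.3-0601} is replaced by $(u(s,z)-v(s,z))^2$; every subsequent step --- the splitting into the $K_1(t)$-part handled via Lemma \ref{lem-2.1-0531} and the $Su(t,y_0)$-part handled via the semigroup identity $\int_0^1 g_D^2(s,y_0,y)\,dy=g_D(2s,y_0,y_0)$ --- carries over unchanged.

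I do not expect any genuine obstacle here: the content is already contained in Lemma \ref{lem-2.3-0601}. The two points worth checking are that $B_{p,\beta}$ is stable under differences (immediate, being a linear space) and that $S$ is genuinely linear once continuous versions are chosen (routine). What matters is that the estimate is inherited with $\beta$-independent constant $C$ and with the decay factors $|\beta|^{-p(1-\alpha)/4}$ and $|\beta|^{-p/4}$: combined with Lemma \ref{lem-2.3-0601} and the Lipschitz property $(A.2)$ --- which makes $u\mapsto \lambda S(\sigma\circ u)$ Lipschitz on $B_{p,\beta}$ with constant $\lambda K_U C^{1/p}\,(|\beta|^{-p(1-\alpha)/4}+|\beta|^{-p/4})^{1/p}$ --- this is precisely what lets one take $|\beta|$ large enough to make the solution map $u\mapsto D_1(t,x)+\lambda S(\sigma\circ u)$ a strict contraction, yielding Theorem \ref{thm-1.1-0531}.
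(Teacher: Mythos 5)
Your proposal is correct and matches the paper's proof, which likewise starts from the identity $Su(t,x)-Sv(t,x)=\int_0^t\int_0^1 g_D(t-s,x,z)\big(u(s,z)-v(s,z)\big)\,w(ds\,dz)$ and then reruns the three estimates of Lemma \ref{lem-2.3-0601} with $u$ replaced by $u-v$ (introducing $K_2(t)$ in place of $K_1(t)$). Your primary route --- invoking linearity of $S$ and applying Lemma \ref{lem-2.3-0601} directly to $h=u-v\in B_{p,\beta}$ --- is a clean shortcut to the same conclusion with the same constant.
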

\begin{proof}
The proof is essentially same as that of Lemma \ref{lem-2.3-0601}. So we will only write down the different parts and leave the details to the reader.
Form the definition of $Su$, it follows that 
\begin{align*}
Su(t, x)-Sv(t,x)= \int_0^t\int_0^1 g_D(t-s,x,z) \big(u(s,z) - v(s,z)\big)w(ds,dy).
\end{align*}
Then, similarly to  \eqref{eq-2.18-0601}, 
\begin{align*}
& \mathbb{E}[|Su(t,x)-Sv(t,x)- (Su(t,y)-Sv(t,y))|^p]\\
& \leq \kappa^p(p) |x-y|^{\alpha p/2} \left(\int_0^t \int_{0}^1 \Big|\int_0^1 \partial_x g_D(t-s,x+\theta(x-y), z)d\theta \Big|^\alpha\right. \notag \\
&\qquad  \times  |g(t-s, x, z)-g(t-s, y, z)|^{2-\alpha}    \mathbb{E}[ |u(s,z) - v(s,z) |^p]^{2/p}dzds \Big)^{p/2} \notag \\
& := K_2(t)|x-y|^{\alpha p/2}.\notag
\end{align*}
Noting that $\alpha \in (\frac{2}{p}, 1)$ and using Lemma \ref{lem-2.2}, there exists a constant $C_1=C_1(p, \alpha)>0$ such that
\begin{align}\label{eq-2.23-0601}
\mathbb{E}\Big[\sup_{x\in [0, 1]} |Su(t,x)- Sv(t,x)|^p \Big]\leq  C_1 \big(K_2(t)+ \mathbb{E}[|Su(t, y_0) -  Sv(t, y_0)|^p] \big), y_0\in [0,1].
\end{align}
On the analogy of \eqref{eq-2.18-0601} and \eqref{eq-2.19-0601}, we can deduce that for constants $C_2$ and $C_3$ depending only on $p$ and $\alpha$,
$t \geq 0$ and any $x, y \in [0, 1]$
\begin{align}\label{eq-2.24-0601}
K_2(t) \leq C_2  e^{-\beta t}  \|u-v\|_{p, \beta}^p |{\beta}|^{\frac{p(\alpha-1)}{4}}, 
\end{align}
and
\begin{align}\label{eq-2.25-0601}
\mathbb{E}[Su(t, y_0)-Sv(t, y_0)|^p]
 \leq C_3 e^{-\beta t} \|u-v\|_{p, \beta}^p  |\beta|^{-p/4}.
\end{align}
Thus, we can easily obtain our result, by plugging \eqref{eq-2.24-0601} and \eqref{eq-2.25-0601} into  
\eqref{eq-2.23-0601}.
\end{proof}

Let us now  begin to formulate the proof of Theorem  \ref{thm-1.1-0531} by using  Lemma \ref{lem-2.3-0601} and Lemma \ref{lem-2.4-0601}. 

\begin{proof}{(\it Proof of Theorem \ref{thm-1.1-0531})}
Without loss of the generality, let us suppose that $\lambda =1$ in this part.
Since $u_0$ is continuous on $[0, 1]$ and non-random,
we see that for any $\beta <0$
\begin{align*}
 \|D_{1}(t,x)\|_{p, \beta}^p=& \sup_{t\geq 0}e^{\beta t}\left|\int_0^1g_D(t ,x, y)u_0(y) dy \right|^p \\
 & \leq \|u_0\|_{L^\infty}^p\sup_{t\geq 0}e^{\beta t}
 \left|\int_0^1g(t ,x, y)dy \right|^p \\
& \leq \|u_0\|_{L^\infty}^p,
\end{align*}
which implies  for any $\beta<0$, $D_{1}(t,x) \in B_{p,\beta}$; recalling  that $D_{1}(t,x)$ is defined in \eqref{eq-2.3-1}.

Consider the following operator $T$ on $B_{p,\beta}$ by  
$$Tu(t,x) =D_{1}(t,x) +S(\sigma(u(t,x)).$$
By $(A2)$, it is known that $|\sigma(u)| \leq K_U|u|$. Hence, by Lemma \ref{lem-2.3-0601}, for any $u(t,x )\in B_{p,\beta}$,
$$ \|Tu\|_{p,\beta}^p \leq   \|u_0\|_{L^\infty}^p+  C K_U^p\|u\|_{p, \beta}^p (|\beta|^{\frac{-p(1-\alpha)}{4}}+ |\beta|^{-p/4})<\infty,$$
which $\alpha\in (2/p,1)$ and $C$ is the constant in Lemma \ref{lem-2.3-0601}.
Thus,  we have that 
$T$ maps $B_{p, \beta}<0$ into itself for any $\beta$. 

Analogously, from Lemma \ref{lem-2.4-0601}, it follows that  for each $\alpha\in (2/p,1)$ and  for any $u, v \in B_{p, \beta}$, 
\begin{align}\label{eq-2.20-0618}
\|Tu-Tv\|_{p,\beta}^p = K_U^p\|Su-Sv\|_{p,\beta}^p  \leq CK_U^p \|u-v\|_{p, \beta}^p (|\beta|^{\frac{-p(1-\alpha)}{4}}+ |\beta|^{-p/4}),
\end{align}
where $C$ is the constant in Lemma \ref{lem-2.4-0601}. Recall that $C$ depends on $p, \alpha$, which  is independent of $\beta$.
Since \eqref{eq-2.20-0618} is satisfied for any $\beta<0$, and noting that $\alpha \in (2/p, 1)$, we choose a $\beta_0<0$ such that for any $\beta< \beta_0$, 
$$CK_U^p(|\beta|^{\frac{-p(1-\alpha)}{4}}+ |\beta|^{-p/4})<1,$$
which implies that for any $\beta< \beta_0<0$, the operator
$T$ on the Banach space $B_{p, \beta}$ is contractive. Consequently, the existence and uniqueness of the solution $u$ in $B_{p, \beta}$ is proved.  

The second part is a direct conclusion of the above proof, by noting that for any $\beta < \beta_0$, $\|Tu\|_{p,\beta}<\infty.$ 
\end{proof}

From now, we are going to give the proof of Theorem \ref{thm-1.2-0608}. 
To do it, we will state another property of $g_D(t,x,y)$. From the concrete form $g_D(t,x,y)$, see \eqref{eq-2.1}, it is easy to see that there exists a constant $K_3>0$ such that for any $t\geq 1$,
\begin{align}\label{eq-2.3}
g_D(t,x,y) \leq K_3 e^{-\pi^2 t}.
\end{align}
The next lemma is required.
\begin{lem}\label{lem-2.1}
Assume $\alpha \in (0, 1)$ and $\beta \in (0, (2-\alpha)\pi^2 )$. Then there exists a constant $C>0$ depending on $\alpha, $ such that for any $t \geq 0$ and $x\in [0, 1]$
\begin{align*}
\int_0^t e^{\beta s}s^{-\alpha} \int_0^1|g_D(s, x, y)|^{2-\alpha}dyds \leq C \Big(\beta^{\frac{\alpha-1}{2}} + \frac{1}{(2-\alpha)\pi^2 -\beta} \Big).
\end{align*} 
\end{lem}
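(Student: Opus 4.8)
The plan is to split the time integral at $t=1$ and bound the two pieces by essentially the two terms that appear on the right-hand side. For the near-zero part, $\int_0^{1\wedge t} e^{\beta s}s^{-\alpha}\int_0^1 |g_D(s,x,y)|^{2-\alpha}\,dy\,ds$, the weight $e^{\beta s}$ is bounded by $e^\beta$ on $[0,1]$, so this integral is dominated by $e^\beta \int_0^\infty e^{-s}s^{-\alpha}\int_{\mathbb R}|g(s,x,y)|^{2-\alpha}\,dy\,ds$ up to relabeling; in fact it is cleaner to just compare with the estimate already proved in Lemma \ref{lem-2.1-0531} (or rerun that computation), using $0\le g_D\le g$ and the Gaussian identity $\int_{\mathbb R} g(s,x,y)^{2-\alpha}\,dy = (2\pi s)^{(\alpha-1)/2}(2-\alpha)^{-1/2}$, which produces $\int_0^1 e^{\beta s}s^{-\alpha}\cdot s^{(\alpha-1)/2}\,ds = \int_0^1 e^{\beta s}s^{-(1+\alpha)/2}\,ds \le e^\beta \Gamma(\tfrac{1-\alpha}{2})$ since $(1+\alpha)/2<1$. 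Up to a constant depending only on $\alpha$, this contribution is bounded by a constant, which in turn is $\le C\beta^{(\alpha-1)/2}$ when $\beta$ is bounded, and for large $\beta$ one simply keeps it as the harmless bounded term; either way it is absorbed into $C\big(\beta^{(\alpha-1)/2}+((2-\alpha)\pi^2-\beta)^{-1}\big)$.

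For the tail part, $\int_1^t e^{\beta s}s^{-\alpha}\int_0^1 |g_D(s,x,y)|^{2-\alpha}\,dy\,ds$ (vacuous if $t\le 1$), I would use the exponential decay \eqref{eq-2.3}: for $s\ge 1$, $g_D(s,x,y)\le K_3 e^{-\pi^2 s}$, hence $|g_D(s,x,y)|^{2-\alpha}\le K_3^{2-\alpha} e^{-(2-\alpha)\pi^2 s}$, and integrating in $y$ over $[0,1]$ costs nothing. Also $s^{-\alpha}\le 1$ for $s\ge 1$. Therefore this piece is at most $K_3^{2-\alpha}\int_1^\infty e^{(\beta-(2-\alpha)\pi^2)s}\,ds = K_3^{2-\alpha}\,\dfrac{e^{\beta-(2-\alpha)\pi^2}}{(2-\alpha)\pi^2-\beta}\le \dfrac{K_3^{2-\alpha}}{(2-\alpha)\pi^2-\beta}$, using precisely the hypothesis $\beta<(2-\alpha)\pi^2$ so that the exponent is negative and the improper integral converges. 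This is exactly the second term on the right-hand side, up to an $\alpha$-dependent constant.

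Adding the two bounds and taking $C$ to be the maximum of the two $\alpha$-dependent constants gives
\begin{align*}
\int_0^t e^{\beta s}s^{-\alpha}\int_0^1|g_D(s,x,y)|^{2-\alpha}\,dy\,ds \le C\Big(\beta^{\frac{\alpha-1}{2}}+\frac{1}{(2-\alpha)\pi^2-\beta}\Big),
\end{align*}
uniformly in $t\ge 0$ and $x\in[0,1]$, which is the claim. (One small bookkeeping point: to write the near-zero contribution as $C\beta^{(\alpha-1)/2}$ rather than just a constant, note that on any bounded range of $\beta$ a constant is $\lesssim \beta^{(\alpha-1)/2}$ since $(\alpha-1)/2<0$; as $\beta\to(2-\alpha)\pi^2$ stays bounded, so this is automatic. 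Alternatively one can leave that contribution folded into the second term, since $((2-\alpha)\pi^2-\beta)^{-1}$ is bounded below by a positive constant on the relevant range — but keeping $\beta^{(\alpha-1)/2}$ matches the statement as written.)

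I do not expect any real obstacle here: the lemma is a routine splitting argument, and the only points requiring a little care are (i) correctly invoking the Gaussian $L^{2-\alpha}$-in-space identity and checking the exponent $(1+\alpha)/2<1$ that makes $\int_0^1 s^{-(1+\alpha)/2}\,ds$ finite, and (ii) making sure the constant $C$ is genuinely independent of $t$ and $x$ — which it is, since after the bound \eqref{eq-2.3} the $x$-dependence has disappeared and the $t$-dependence only through an upper limit of integration that we extend to $\infty$. The mildly delicate phrasing is matching the stated right-hand side term $\beta^{(\alpha-1)/2}$ for the small-time part, handled as noted above.
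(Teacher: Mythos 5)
Your proof is correct and follows essentially the same route as the paper: split the time integral at $s=1$, control the near-zero piece via $g_D\le g$ and the Gaussian $L^{2-\alpha}$ identity, and control the tail via the decay bound \eqref{eq-2.3}, using $\beta<(2-\alpha)\pi^2$ for convergence. The only cosmetic difference is that the paper extracts the factor $\beta^{(\alpha-1)/2}$ from the small-time integral by a change of variables, whereas you bound that piece by an $\alpha$-dependent constant and absorb it using the boundedness of $\beta$ on $(0,(2-\alpha)\pi^2)$ — both are fine.
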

\begin{proof}
This proof can be easily completed by using the properties \eqref{eq-2.2} and \eqref{eq-2.3} of  $g_D(t,x,y)$.
In fact, noting that \eqref{eq-2.2}, we have
\begin{align}\label{eq-2.6-1}
& \int_0^1 e^{\beta s}s^{-\alpha} \int_0^1 |g_D(s, x, y)|^{2-\alpha}dy ds\\
\leq&
\int_0^1 e^{\beta s}s^{-\alpha} \int_{\mathbb{R}} |g(s, x, y)|^{2-\alpha}dy  ds\notag\\
=& (2\pi)^{\frac{\alpha -1}{2}}(2-\alpha)^{-\frac12}
   \int_0^1 e^{\beta s}s^{-\frac{1+\alpha}{2}}ds \notag\\ 
\leq & (2\pi)^{\frac{\alpha -1}{2}} 
         (2-\alpha)^{-\frac12} \beta^{\frac{\alpha-1}{2}}
   \int_0^{2\pi^2} e^{s}s^{-\frac{1+\alpha}{2}}ds. \notag
\end{align}
Since $\alpha \in (0, 1)$,  the integrand $e^{s}s^{-\frac{1+\alpha}{2}}$ above is integrable on $[0, 2\pi^2]$, from \eqref{eq-2.6-1}, there exists a  constant $C_1>0$ depending only on $\alpha$ such that 
\begin{align}\label{eq-2.6-2}
 \int_0^1 e^{\beta s}s^{-\alpha} \int_0^1 |g_D(s, x, y)|^{2-\alpha}dy ds
\leq C_1 \beta^{\frac{\alpha-1}{2}}.
\end{align}
On the other hand, by \eqref{eq-2.3} and $\beta \in [0, (2-\alpha)\pi^2 )$, we see that 
\begin{align}\label{eq-2.7-1}
& \int_1^\infty e^{\beta s}s^{-\alpha} \int_0^1 |g_D(s, x, y)|^{2-\alpha}dy \\
\leq&
K_3^{2-\alpha}\int_1^\infty e^{\beta s}s^{-\alpha} \int_0^1 e^{(2-\alpha)\pi^2s}dyds 
 \notag \\
= &
K_3^{2-\alpha}\int_1^\infty e^{(\beta- (2-\alpha)\pi^2 ) s}s^{-\alpha}  ds  \notag\\
 \leq &K_3^{2-\alpha} \int_1^\infty e^{(\beta- (2-\alpha)\pi^2) s} ds \notag\\
\leq & C_2 ((2-\alpha)\pi^2 -\beta)^{-1}, \notag
\end{align}
where 
$C_2>0$ is a constant depending  on $\alpha$.  As a consequence of \eqref{eq-2.6-2} and \eqref{eq-2.7-1}, we can complete our proof.
\end{proof}

In the following, we will formulate the proof of Theorem \ref{thm-1.2-0608}. 

\begin{proof}({\it Proof of  Theorem \ref{thm-1.2-0608} })
By Jensen's inequality and Theorem 1.1 \cite{FoNu},
we easily have $$ 
-\infty <\limsup_{t\to  \infty}\frac1t\log\mathbb{E}[|u_\lambda(t,x)|^p].
$$
Thus, the main task is to give the proof for the upper bound.
Let us assume that $\alpha\in (2/p,1)$ and $\beta \in (0, (2-\alpha)\pi^2 )$ in this part. 
Recalling the definition of $\|u_\lambda\|_{p, \beta}$, see \eqref{eq-1.3-0613}, it is sufficient to show for some $\beta \in (0, (2-\alpha)\pi^2 )$, 
 there exist $\lambda_L>0$,  such that 
for any $\lambda \in (0, \lambda_L)$, the following holds:
\begin{align}\label{eq-3.6-0613}
\|u_\lambda\|_{p, \beta}< \infty.
\end{align}

Let us first consider the term $D_{2, \lambda}(t,x)$ appeared  in  \eqref{eq-2.3-1}, whose estimate is essentially different from that formulated by M. Foondun and E. Nualart \cite{FoNu} as we will see  below.
Since $D_{2, \lambda}(t,x) =S(\lambda\sigma(u_\lambda(t,x)))$ and then by \eqref{eq-2.16-0601} and \eqref{eq-2.14-0615}, we have that for any $y_0\in [0,1]$
\begin{align}\label{eq-3.7-0613}
\mathbb{E}\Big[\sup_{x\in [0, 1]} |D_{2, \lambda}(t,x)|^p \Big]
\leq & 2^{p-1}\lambda^p K_U^p (\kappa^p \mathbb{E}[B(t)] + \mathbb{E}[|Su(t, y_0)|^p])\\
\leq & 
2^{p-1}\lambda^p K_U^p (\kappa^p  C_1K_1(t) + \mathbb{E}[|Su(t, y_0)|^p]), \notag
\end{align}
where $K_1(t)$ is defined in \eqref{eq-2.18-0612}.
Let us now give the estimate of $K_1(t)$ based on Lemma \ref{lem-2.1}.
In fact, similarly to \eqref{eq-2.18-0601}, by Lemma \ref{lem-2.1}, we can easily see that for all $t \geq 0$ and any $x, y \in [0, 1]$
\begin{align}\label{eq-2.27}
K_1(t)& \leq  \kappa^p(p) C_2 \Big( \int_0^t \int_{0}^1 (t-s)^\alpha |g(t-s, x,z)-g(t-s,y, z)|^{2-\alpha}   \mathbb{E}[ \|u(s)\|_{L^\infty}^p]^{2/p}dzds\Big)^{p/2} \notag \\
&\leq\kappa^p(p) C_3  (\beta^{\frac{\alpha-1}{2}} + ((2-\alpha)\pi^2 -\beta)^{-1})^p  e^{-\beta t} \|u\|_{p, \beta}^p,
\end{align}
where $C_2$ and $C_3$ are generic positive constants and $\alpha \in (0, 1)$ has been used.
On the other hand, it is easier to see that 
\begin{align}\label{eq-3.9-0613}
\mathbb{E}[ |Su(t, y_0)|^p] &\leq 
\kappa^p(p) \left(\int_0^t \int_{0}^1 g^2(t-s,  y_0, y)
\lambda^2 \mathbb{E}[|u_\lambda(s,y)|^p]^{2/p}dyds \right)^{p/2}  \\
& \leq  \kappa^p(p)  \left(\int_0^t \int_{0}^1 g^2(t-s, y_0, y)e^{\beta(t-s)}e^{-\beta(t-s)}
\lambda^2 \mathbb{E}[\|u(s)\|^p]^{2/p}dyds \right)^{p/2} \notag \\
& \leq  \kappa^p(p) e^{-\beta t} \|u\|_{p, \beta}^p  \left(\int_0^t  g^2(2s,  y_0,  y_0)e^{\beta s}ds \right)^{p/2}, \notag
\end{align} 
Similarly to Lemma \ref{lem-2.1}, we can easily  show that  if $\beta \in(0, 2\pi^2)$, 
then 
 \begin{align*}
 \sup_{t \geq 0, y\in [0, 1]}\int_0^t  g(2s, y, y)e^{\beta s}ds <C_3(\frac{1}{2\pi^2 -\beta}+ \frac{1}{\sqrt{\beta}}).
 \end{align*}
Thus, combining this with \eqref{eq-3.9-0613}, we see 
\begin{align} \label{eq-2.28-1} 
\mathbb{E}\Big[|Su(t, y_0)|^p] \leq C_4 \kappa^p(p)  (\frac{1}{2\pi^2 -\beta}+ \frac{1}{\sqrt{\beta}}) e^{-\beta t} \|u\|_{p, \beta}^p.  
\end{align} 
Consequently, 
plugging \eqref{eq-2.27} and \eqref{eq-2.28-1} into \eqref{eq-3.7-0613},  
we have that  there exists a constant $C_5$ depending  on  $\beta \in (0, (2-\alpha)\pi^2 )$ such that for all $t\geq 0$  
\begin{align*} 
\mathbb{E}\Big[\sup_{x\in [0, 1]} |D_{2, \lambda}(t,x)|^p \Big]\leq C_5 \lambda^p K_U^p  e^{-\beta t} \|u\|_{p, \beta}^p,
\end{align*} 
and then we can get that for each $\beta \in (0, (2-\alpha)\pi^2 )$,
$D_{2, \lambda}(t,\cdot) \in B_{p,\beta}$. 

Let us now turn to consider $D_{1}(t,x)$, which is a easy part. Using \eqref{eq-2.2} and $\eqref{eq-2.3}$, it is easy to see that for any $\beta\in (0, \pi^2)$,
$$
\sup_{t\geq 0, x, y\in [0, 1]}\int_0^1 e^{\beta t} g_D(t,x, y)dy<\infty, 
$$
Since $u_0\in C([0,1])$, the above estimate tells us that $\|D_{1}(t,\cdot)\|_{p, \beta}\leq \|u_0\|_{L^\infty},$ that is,  $D_{1}(t,\cdot) \in B_{p,\beta}$ for $\beta \in(0, p\pi^2)$. 
%

Consequently, we proved that the solution $u_\lambda(t,x) \in B_{p,\beta}$  for each $\beta \in(0, (2-\alpha)\pi^2)$, which is equivalent to \eqref{eq-3.6-0613}. Therefore, the proof is completed.   
\end{proof}
\section{Proof of Theorem \ref{thm-3.1-0608}}

The proof of this theorem is essentially depends on the global behavior of the lower bound for the heat kernel $g_D(t,x, y)$. Such estimate is very important and has been studied actively. For large time and short time, we, for example, refer the reader to \cite{Dav-89} and \cite{Zhan-02} respectively. For our aim,  the global behavior  for $g_D(t,x, y)$ is needed, which is studied by many authors for different domains,  please see \cite{Ria-13}, \cite{Son-04} and \cite{Zhan-03}. Based on their research, we will state in the next corollary using our notation and omit its proof.
\begin{lem}\label{lem-3.1-0608}
There exist two strictly positive constants $\kappa_1$ and $\kappa_2$ such that for any $x, y\in [\gamma, 1-\gamma], \gamma \in (0, 1/4)$ 
\begin{align*}
g_D(t,x) \geq \kappa_1\exp(-\pi^2 t) \exp\big(-\kappa_2\frac{|x-y|^2}t \big) (t^{-\frac12} 1_{(0, \gamma^2]}(t) +1_{(\gamma^2, \infty)}(t)), \ t>0.
\end{align*}
\end{lem}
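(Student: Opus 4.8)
The plan is to prove the claimed short-time/long-time lower bound by splitting $t>0$ into three regimes and applying to each the tool best suited to it. A useful preliminary reduction is that the Gaussian factor $\exp(-\kappa_2|x-y|^2/t)$ only needs to be retained in the short-time regime: for $t>\gamma^2$ one has $\exp(-\kappa_2|x-y|^2/t)\le 1$, so in that range it suffices to prove the cleaner inequality $g_D(t,x,y)\ge\kappa_1 e^{-\pi^2 t}$ and then trivially reinsert the (smaller) Gaussian factor. Accordingly I would handle separately the short-time window $t\in(0,\gamma^2]$, a fixed intermediate window $\gamma^2<t\le T_0$, and the long-time tail $t\ge T_0$, and finally take the worst of the three constants.

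For the short-time window I would use the probabilistic representation of the killed kernel. Writing $g(t,x,y)=(2\pi t)^{-1/2}e^{-(x-y)^2/(2t)}$ for the free kernel of \eqref{eq-2.2}, one has
\begin{align*}
g_D(t,x,y)=g(t,x,y)\,\mathbb{P}\big(W_s\in(0,1)\ \forall s\in[0,t]\ \big|\ W_0=x,\,W_t=y\big),
\end{align*}
where the conditional law is that of a Brownian bridge from $x$ to $y$ over $[0,t]$. Decomposing the bridge into the segment $x+\tfrac st(y-x)$ (a convex combination of $x,y$, hence lying in $[\gamma,1-\gamma]$) plus a pinned fluctuation $\beta_s$ with $\beta_0=\beta_t=0$, the path stays in $(0,1)$ whenever $\sup_{s\le t}|\beta_s|<\gamma$. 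By Brownian scaling $\sup_{s\le t}|\beta_s|\stackrel{d}{=}\sqrt t\,\sup_{u\le1}|b_u|$ for a standard bridge $b$, so the staying probability is at least $\mathbb{P}(\sup_{u\le1}|b_u|<\gamma/\sqrt t)\ge\mathbb{P}(\sup_{u\le1}|b_u|<1)=:p_0>0$ because $t\le\gamma^2$. This gives $g_D(t,x,y)\ge p_0(2\pi)^{-1/2}t^{-1/2}e^{-(x-y)^2/(2t)}$, which is exactly the desired short-time bound with $\kappa_2=\tfrac12$, the factor $e^{-\pi^2 t}\le1$ being inserted harmlessly on the right.

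For the long-time tail I would read off the leading eigenmode from the spectral series \eqref{eq-2.1}: the $n=1$ term is $2e^{-\pi^2 t}\sin(\pi x)\sin(\pi y)\ge 2\sin^2(\pi\gamma)e^{-\pi^2 t}$ on $[\gamma,1-\gamma]^2$, while the tail obeys $\big|2\sum_{n\ge2}e^{-(n\pi)^2t}\sin(n\pi x)\sin(n\pi y)\big|\le 2\sum_{n\ge2}e^{-(n\pi)^2t}\le Ce^{-4\pi^2 t}$; choosing $T_0$ so large that this is at most half the leading term yields $g_D(t,x,y)\ge\sin^2(\pi\gamma)e^{-\pi^2 t}$. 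On the intermediate compact set $[\gamma^2,T_0]\times[\gamma,1-\gamma]^2$ the map $(t,x,y)\mapsto e^{\pi^2 t}g_D(t,x,y)$ is continuous (by uniform convergence of \eqref{eq-2.1}) and strictly positive (the staying probability above is positive for interior endpoints), hence attains a positive minimum $c_2$, giving $g_D\ge c_2e^{-\pi^2 t}$ there. Taking $\kappa_1=\min\{p_0(2\pi)^{-1/2},\ \sin^2(\pi\gamma),\ c_2\}$ and $\kappa_2=\tfrac12$ assembles the three regimes into the stated inequality.

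The main obstacle is the short-time Gaussian lower bound \emph{uniformly} in $x,y$ across the whole interior, including the case where $|x-y|$ is comparable to $1-2\gamma$. A purely analytic route via the method of images would require controlling the alternating series $\sum_k\big[g(t,x,y+2k)-g(t,x,-y+2k)\big]$ and checking that the principal image dominates all reflected images for every $t\le\gamma^2$; although each reflected image carries a discount of order $e^{-2\gamma^2/t}$, bookkeeping their sum cleanly up to the endpoint $t=\gamma^2$ is fiddly. The Brownian-bridge representation sidesteps precisely this cancellation analysis, which is why I would make it the backbone of the proof.
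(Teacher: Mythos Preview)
Your proof is correct. The paper, by contrast, does not prove this lemma at all: it simply records the estimate as a consequence of known global Dirichlet heat-kernel bounds, citing \cite{Ria-13}, \cite{Son-04} and \cite{Zhan-03}, and explicitly omits any argument.

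Your route is therefore genuinely different and considerably more elementary. Rather than importing two-sided sharp kernel estimates valid on general (Lipschitz or $C^{1,1}$) domains, you exploit the one-dimensional structure directly: the Brownian-bridge representation $g_D=g\cdot\mathbb{P}(\text{stay in }(0,1))$ together with scaling gives the short-time Gaussian lower bound with a single line of probability, the spectral series \eqref{eq-2.1} isolates the leading mode for large time, and a compactness argument fills the gap. What the cited literature buys is generality (arbitrary bounded domains, fractional Laplacians, boundary behaviour of $g_D$ as $x,y\to\partial D$); what your argument buys is a self-contained proof with explicit constants and no external input beyond standard facts about Brownian bridges. For the purposes of this paper, where only the interior lower bound on $[\gamma,1-\gamma]^2$ is needed, your approach is entirely adequate and arguably preferable.
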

\begin{rem}
In the papers of \cite{Ria-13}, \cite{Son-04} and \cite{Zhan-03}, sharp bounds of both sides of Dirichlet
heat kernel for the Laplacian on bounded  domains with different conditions are discussed. To prove our result, the above lower bound of $g_D(t,x,y)$ is enough. So we do not state the corresponding  upper bound,  see \cite{Ria-13}, \cite{Son-04} and \cite{Zhan-03} and the references therein.
\end{rem}        
Let us now formulate the proof of Theorem \ref{thm-3.1-0608} based on the above corollary.
\begin{proof}
({\it Proof of Theorem \ref{thm-3.1-0608}})
Under our assumptions,  by the positivity of $g_D(t,x,y)$ and Lemma \ref{lem-3.1-0608}, it is easy to see that for all $x \in [\gamma, 1-\gamma]$,
\begin{align}\label{eq-3.3-0608}
D_{1}(t,x)=&\int_0^1 g_D(t,x, y)u_0(y)dy\\
\geq & \inf_{x, y \in [\gamma, 1-\gamma] } g_D(t,x, y) \int_\gamma^{1-\gamma} u_0(y)dy \notag
\\
 \geq &  C_1\inf_{x, y \in [\gamma, 1-\gamma] }\exp(-\pi^2 t) \exp\big(-\kappa_2\frac{|x-y|^2}t \big) (t^{-\frac12} 1_{(0, \gamma^2]}(t) +1_{(\gamma^2, \infty)}(t)) \notag\\
\geq &  C_1\exp(-\pi^2 t) \exp(-\kappa_2t^{-1}) \Big(t^{-\frac12} 1_{(0, \gamma^2]}(t) +1_{(\gamma^2, \infty)}(t) \Big),  \notag
\end{align}
where $C_1 =\kappa_1 \int_\gamma^{1-\gamma} u_0(y)dy>0$ by the assumption $(A1)$. \\
Noting that $\inf_{t\in (0, \kappa^2]}t^{-\frac12\exp(-\kappa_2t^{-1} ) }>0$  and $\exp(-\kappa_2t^{-1} ) \geq \exp(-\kappa_2\gamma^{-2} )$ for $t \geq \gamma^2$, the above estimate \eqref{eq-3.3-0608},  implies that there is a constant $C_2>0$ such that for all $t \geq 0$  and $x \in [\gamma, 1-\gamma]$,
\begin{align}\label{eq-3.4-0608}
\int_0^1 g_D(t,x, y)u_0(y)dy
\geq & C_2 \exp(-\pi^2 t)
\end{align}
By Ito's isometry and the assumption $|\sigma(u)| \geq K_L|u|, u\in \mathbb{R}$, we have that 
\begin{align}\label{eq-3.5-0608}
\mathbb{E}[|D_{2, \lambda}(t,x)|^2] =& \int_{0}^t \int_{0}^1 
g_D^2(t-s,x, y) \mathbb{E}[|\lambda \sigma(u_\lambda(s, y))|^2]dyds  \\
\geq& \lambda^2 K_L^2  \int_{0}^t \int_{0}^1 
g_D^2(t-s,x, y) \mathbb{E}[|u_\lambda(s, y)|^2]dyds. \notag 
\end{align}
From now, let us deal with the term  $$\int_{0}^t \int_{0}^1 
g_D^2(t,x, y) \mathbb{E}[|u_\lambda(s, y)|^2]dyds$$ appeared in the last inequality.
For brevity, let us define  $h(t)=\inf_{x\in [\gamma, 1-\gamma]}\mathbb{E}[|u_\lambda(t,x)|^2]$. Using Lemma  \ref{lem-3.1-0608}, if $x\in [\gamma, 1-\gamma]$, then we have
\begin{align}\label{eq-3.6-0608}
&\int_{0}^t \int_{0}^1 
g_D^2(t,x, y) \mathbb{E}[|u_\lambda(s, y)|^2]dyds \\
\geq & \int_0^t \int_{\gamma}^{1-\gamma} 
 g_D^2(t-s,x, y) h(s)dyds \notag \\
\geq & \int_0^t \int_{\gamma}^{1-\gamma} 
\kappa_1^2\exp(-2\pi^2( t-s)) \exp\big(-2\kappa_2\frac{|x-y|^2}{t-s} \big) \notag \\
& \qquad \times \Big((t-s)^{-\frac12} 1_{(0, \gamma^2]}(t-s) +1_{(\gamma^2, \infty)}(t-s) \Big)^2 h(s)dyds \notag\\
= & \int_0^t \int_{\gamma}^{1-\gamma} 
\kappa_1^2\exp(-2\pi^2( t-s)) \exp\big(-2\kappa_2\frac{|x-y|^2}{t-s} \big) \notag \\
& \qquad \times \Big((t-s)^{-\frac12} 1_{(0, \gamma^2]}(t-s) +1_{(\gamma^2, t)}(t-s) \Big) h(s)dyds \notag
\end{align} 
Set $A(x; s,t):=[\gamma, 1-\gamma]\cap\{y: |y-x| \geq \sqrt{t-s} \}, s\leq t$ and then noting that $\gamma \in(0,1/4)$,  
we can show that for any $x\in [\gamma, 1-\gamma],$ 
$|A(x;s,t)| \geq \sqrt{t-s}$. Consequently, noting that for $y\in A(x;s,t)$, $|y-x|\leq \sqrt{t-s}$ and the non-increasing of $e^{-x}$,   for $s\in (t-\gamma^2, t)$, we have 
\begin{align} \label{eq-3.7-0608}
& \int_{\gamma}^{1-\gamma} 
\exp(-2\pi^2 (t-s)) \exp\big(-2\kappa_2\frac{|x-y|^2}{t-s} \big) (t-s)^{-1} dy\\
\geq & \int_{A(x;t,s)}
\exp(-2\pi^2 (t-s)) \exp\big(-2\kappa_2\frac{|x-y|^2}{t-s} \big) (t-s)^{-1} dy \notag \\
\geq &  \exp(-2\kappa_2) \exp(-2\pi^2 (t-s)) (t-s)^{-1} |A(x;t,s)| \notag \\
\geq &  \exp(-2\kappa_2) \exp(-2\pi^2 (t-s)) (t-s)^{-\frac12}. \notag
\end{align} 
On the other hand, if  $s\in (0, t-\gamma^2]$, then  for any $x, y \in [\gamma, 1-\gamma]$, $$\exp\big(-2\kappa_2 \frac{|x-y|^2}{t-s} \big) \geq 
\exp\big(-2\kappa_2 \frac{(1-2\gamma)^2}{t-s} \big)
\geq \exp(-2\kappa_2\gamma^{-2})>0.$$
Thus, it is  easy to see that if $s\in (0, t-\gamma^2]$, then  for any $x\in [\gamma, 1-\gamma]$  
\begin{align}\label{eq-3.8-0608}
 & \int_{\gamma}^{1-\gamma} 
\kappa_1^2\exp(-2\pi^2 (t-s)) \exp\big(-2\kappa_2 \frac{|x-y|^2}{t-s} \big)dy \\
\geq & \exp(-2\kappa_2\gamma^{-2})\exp(-2\pi^2 (t-s)). \notag
\end{align}
Inserting \eqref{eq-3.7-0608} and \eqref{eq-3.8-0608} into \eqref{eq-3.6-0608},  we see that there exists a constant $C_3>0$ such that for all $t>0$ and $x\in [\gamma, 1-\gamma]$
\begin{align*}
&\int_{0}^t \int_{0}^1 
g_D^2(t,x, y) \mathbb{E}[|u_\lambda(s, y)|^2]dyds \\
\geq & C_3\int_0^t \exp(-2\pi^2 (t-s)) \Big((t-s)^{-\frac12} 1_{(0, \gamma^2]}(t-s) +1_{(\gamma^2, t]}(t-s) \Big) h(s)ds,
\end{align*} 
and then, noting that $1\geq \gamma (t-s)^{-\frac12}$ for $t-s \geq \gamma^2$, 
we deduce that  
\begin{align*}
\int_{0}^t \int_{0}^1 
g_D^2(t,x, y) \mathbb{E}[|u_\lambda(s, y)|^2]dyds  
\geq  C_4\int_0^t \exp(-2\pi^2 (t-s)) (t-s)^{-\frac12}  h(s)ds
\end{align*}
holds for some constant $C_4>0$.\\  
Consequently, combining \eqref{eq-3.5-0608} with 
the above estimate,
we have
\begin{align}\label{eq-3.10-0608}
\mathbb{E}[|D_{2, \lambda}(t,x)|^2] \geq C_4\lambda^2 K_L^2 \int_0^t \exp(-2\pi^2 (t-s)) (t-s)^{-\frac12}  h(s)ds.
\end{align}
Noting that  under our assumptions, Ito's isometry gives that  
\begin{align*}
\mathbb{E}[|u_\lambda(t,x)|^2] = \left(\int_0^1 g_D(t,x, y)u_0(y)dy\right)^2 + \mathbb{E}[|D_{2, \lambda}(t,x)|^2].
\end{align*}
Using \eqref{eq-3.4-0608} and \eqref{eq-3.10-0608}, we have that for any $x\in[\gamma, 1-\gamma]$,
\begin{align*}
\mathbb{E}[|u_\lambda(t,x)|^2] \geq C_2^2 \exp(-2\pi^2 t)+
C_4\lambda^2 K_L^2 \int_0^t \exp(-2\pi^2 (t-s)) (t-s)^{-\frac12}  h(s)ds,
\end{align*}
which implies that for any $t>0$  
\begin{align}\label{eq-3.11-0608}
h(t) \geq C_2^2 \exp(-2\pi^2 t)+
C_4\lambda^2 K_L^2 \int_0^t \exp(-2\pi^2 (t-s)) (t-s)^{-\frac12}  h(s)ds.
\end{align}
Let us now define $H(t)= \exp(2\pi^2 t)h(t)$ and then from \eqref{eq-3.11-0608}, we get the following relation for $H(t)$:
for any $t>0$  
\begin{align*}
H(t) \geq C_2^2 +
C_4\lambda^2 K_L^2 \int_0^t (t-s)^{-\frac12} H(s)dyds.
\end{align*} 
Finally, owing to Gronwall's inequality, we can easily obtain that 
\begin{align*}
H(t) \geq C_2^2 \exp(C_4^2\lambda^4 K_L^4 t), \ t>0,
\end{align*} 
which completes our proof.
\end{proof}

\section{Proof of Theorem \ref{thm-1.3-0608} and Corollary \ref{cor-1.5-0611} }
The first aim of this part is to formulate the proof of Theorem \ref{thm-1.3-0608} based on Theorem \ref{thm-3.1-0608} and the second one is to prove  Corollary \ref{cor-1.5-0611}  as the application of Theorem \ref{thm-1.3-0608}.  

\begin{proof}{(\it Proof of Theorem \ref{thm-1.3-0608})}
Owing to Theorem \ref{thm-1.1-0531} and $|u_\lambda(t,x)| \leq \|u_\lambda\|_{L^\infty}$, it is sufficient for us  to verify  the lower bound, i.e.,  for any $\lambda \in (\lambda_U, \infty)$ and $ x\in [\gamma, 1-\gamma]$
\begin{align}\label{eq-4.1-0614}
 0 <\liminf_{t\to  \infty}\frac1t\log\mathbb{E}[|u_\lambda(t,x)|^p]. 
\end{align}
Jensen's inequality tells us that for any $p>2$,
$$\mathbb{E}[|u_\lambda(t,x)|^2]^{1/2} \leq \mathbb{E}[|u_\lambda(t,x)|^p]^{1/p},$$
and thus  it is enough for us to show that  there  exists a large enough $\lambda_U$, \eqref{eq-4.1-0614} is satisfied when $p=2$.
 However, this is a quick result of Theorem \ref{thm-3.1-0608}. In fact,  since \eqref{eq-3.1-0608} holds for any $t>0$, we easily know that 
\begin{align*}
 \log \Big(\inf_{x\in [\gamma, 1-\gamma]}\mathbb{E}[|u_\lambda(t,x)|^2]\Big) \geq \log \kappa_1 + (\kappa_2 \lambda^4 K_L^4 -2 \pi^2)t,\  t\geq 0.
\end{align*}
Dividing both sides of the above inequality by $t$ and taking the infimum limit as $t \to \infty$, we see that
\begin{align*}
 \liminf_{t\to \infty}\frac{1}{t}\log  \Big( \inf_{x\in [\gamma, 1-\gamma]}\mathbb{E}[|u_{\lambda}(t,x)|^2] \Big) \geq \kappa_2 \lambda^4 K_L^4 -2\pi^2.
\end{align*}
Let us take  $\lambda_U=\Big( \frac{2\pi^2}{\kappa_2 K_L^4 }\Big)^{1/4}$. Then, from the above inequality, we have that for all $\lambda > \lambda_U$,   \eqref{eq-4.1-0614} holds for $p=2$.
\end{proof}

\begin{proof}{\it (Proof of Corollary \ref{cor-1.5-0611}) }
The first part comes immediately from Theorem \ref{thm-1.2-0608} by noting that for any $p\geq 2$, $\mathbb{E}[\|u_\lambda(t)\|_{L^p}^p]\leq \mathbb{E}[\|u_\lambda(t)\|_{L^\infty}^p]$.\\
Let us now consider the proof of the second part.
By Fubini's theorem and Jensen's inequality, for any $p>2$,
\begin{align*}
\mathbb{E}\left[\int_0^1 |u_\lambda(t,x)|^p dx \right] 
 \geq & \int_0^1\mathbb{E}[u_\lambda^2(t,x)]^{p/2}dx \notag \\
 \geq & \int_\gamma^{1-\gamma}\inf_{x\in [\gamma, 1-\gamma]}\mathbb{E}[u_\lambda^2(t,x)]^{p/2}dx 
 \notag \\
=  & (1-2\gamma)\inf_{x\in [\gamma, 1-\gamma]}\mathbb{E}[u_\lambda^2(t,x)]^{p/2}. \notag 
\end{align*}
 Combining the above estimate with Theorem  \ref{thm-3.1-0608},  we deduce that for all $t>0$
\begin{align}\label{eq-1.15.-0611}
\log\mathcal{E}_p(t, \lambda) \geq &
\frac{1}{p}\log\left((1-2\gamma)\inf_{x\in [\gamma, 1-\gamma]}\mathbb{E}[u_\lambda^2(t,x)])^{p/2}\right)\\
 \geq & \frac{1}{p} \log\big((1-2\gamma)\kappa_1^{p/2} \big) +\frac12(\kappa_2 \lambda^4 K_L^4 -2\pi^2)t. \notag
\end{align}
Finally, dividing both sides by $t$ and taking the infimum limit  as $t \to \infty$, we see that the lower bound holds for any $\lambda > \lambda_U$.  
\end{proof}

\section{Proof of Theorem \ref{thm-1.6-0610} and Corollary \ref{cor-1.7-0613} }

\begin{proof}{\it (Proof of Theorem \ref{thm-1.6-0610})}
It is first task to proof the upper bound.  It is easer than the proofs of Theorem \ref{thm-1.1-0531} and Theorem \ref{thm-1.2-0608}.
By the assumption $(A_1)$ on $u_0$ and the estimate \eqref{eq-2.2} of $g_D(t,x,y)$, it is easy to know that there exists a constant $C_1$ such that for all $t>0$, $$\sup_{x\in [0, 1]}D_{1}(t,x) \leq C_1.$$ 

By a similar  approach  used in the proof of Theorem \ref{thm-1.1-0531}, we can prove  that there is a constant $C_2>0$ such that  for each $t>0$ and any $x\in [0, 1]$,
\begin{align*}
\mathbb{E}[|D_{2, \lambda}(t,x)|^p]^{2/p} \leq & C_2\lambda^2 K_U^2 \int_0^tg_D^2(t-s,x, y) \mathbb{E}[|u_\lambda(s, y)|^p]^{2/p}dyds   \\
\leq & C_2\lambda^2 K_U^2 \int_0^tg_D^2(t-s,x, y) \sup_{y\in [0, 1]}\mathbb{E}[|u_\lambda(s, y)|^p]^{2/p}dyds \notag \\
\leq & C_2\lambda^2 K_U^2 \int_0^tg_D(2(t-s),x,x)) \sup_{y\in [0, 1]}\mathbb{E}[|u_\lambda(s, y)|^p]^{2/p}ds \notag
\\
\leq & C_2\lambda^2 K_U^2 \int_0^t\frac{1}{\sqrt{4\pi(t-s)}}\sup_{y\in [0, 1]}\mathbb{E}[|u_\lambda(s, y)|^p]^{2/p}ds. \notag
\end{align*}
Hence, by Minkowski's inequality and the above estimates, there exist constant $C_3$ and $C_4$ such that for  any $t>0$
\begin{align*}
\sup_{x\in [0,1]} \mathbb{E}[|u_{\lambda}(t, x)|^p]^{2/p}
\leq C_3+ C_4\lambda^2 K_U^2 \int_0^t\frac{1}{\sqrt{(t-s)}}\sup_{y\in [0, 1]}\mathbb{E}[|u_\lambda(s, y)|^p]^{2/p}ds,
\end{align*}
and then for  any $t>0$
\begin{align*}
\sup_{x\in [0,1]} \mathbb{E}[|u_{\lambda}(t, x)|^p]^{2/p}
\leq C_3+2C_3C_4 \sqrt{t}+ C_4^2\lambda^4 K_U^4 \int_0^t\mathbb{E}[|u_\lambda(s, y)|^p]^{2/p}ds,
\end{align*}
As a consequence, owing to  Gronwall inequality,
we have for all 
$t>0$
\begin{align*}
\sup_{x\in [0,1]} \mathbb{E}[|u_{\lambda}(t, x)|^p]^{2/p}
\leq (C_3+2C_3C_4 \sqrt{t})\exp(C_4^2\lambda^4 K_U^4 t),
\end{align*}
which immediately implies the upper bound.

Let us now begin to state the proof of the lower bound, i.e., for some $c_p>0$, 
$$
c_p  K_{L}^4 t\leq   \liminf_{\lambda \to \infty} \lambda^{-4} K_L\log  \Big(\inf_{x\in [\gamma, 1-\gamma]}\mathbb{E}[|u_\lambda(t,x)|^p]  \Big).
$$
It is also a direct conclusion of Theorem  \ref{thm-3.1-0608}. From Jensen's inequality,  it is sufficient for us to show that the above lower bound holds for $p=2$. By Theorem  \ref{thm-3.1-0608}, for any $t>0$
\begin{align*}
\log\left(\inf_{x\in [\gamma, 1-\gamma]}\mathbb{E}[u_\lambda^2(t,x)] \right) \geq \log\kappa_1 +(\kappa_2 \lambda^4 K_L^4 -2\pi^2)t.
\end{align*}
Let us first divide both 
sides of the above inequality by $\lambda^4$ and then take the infimum limit as $\lambda \to \infty$,
the lower bound can be proved immediately.
\end{proof}
In the end, let us state the proof of Corollary \ref{cor-1.7-0613} as the application of Theorem \ref{thm-1.6-0610}. 
\begin{proof}{\it (Proof of Corollary \ref{cor-1.7-0613})}
The lower bound can be easily deduced from \eqref{eq-1.15.-0611}, appeared in the proof of Corollary \ref{cor-1.5-0611}. In fact, dividing both 
sides of \eqref{eq-1.15.-0611} by $\lambda^4$ and then taking the infimum limit as $\lambda \to \infty$, we can easily obtain the lower bound for each $t>0$.

On the other hand, noting that $$\mathbb{E}[\|u(t)\|_{L^p}^p] \leq \sup_{x\in [0, 1]}\mathbb{E}[|u_\lambda(t,x)|^p],$$ the upper bound is obtained immediately by Theorem \ref{thm-1.6-0610}. As a result, the proof of this corollary is completed.
\end{proof}

\vskip 1cm

\section{Appendix } 
According to \cite{GRR} and \cite{Wal}, let us rewrite the celebrated Garsia-Rodemich-Rumsey theorem for our purpose.
Let $\Phi:\mathbb{R} \to [0, \infty)$ be a Young function (a convex and  even  function with $\Phi(0)=0$ and $\lim_{x\to \infty} \Phi(x)=\infty$) and let $\phi: [0,1] \to [0, \infty)$ be continuous and increasing with $\phi(0)=0$. 
\begin{thm}\label{GRR}
Let $\Phi$ and $\phi$ be defined as above. If $f$ is a measurable function on $[0, 1]$ such that 
\begin{align*}
\int_0^1\int_0^1 \Phi\Big(\frac{f(x)-f(y)}{\phi(|x-y|) } \Big)dxdy=B<\infty,
\end{align*}
then 
\begin{align*}
|f(x)-f(y)| \leq 8 \int_0^{|x-y|} \Phi^{-1}\Big(\frac{B}{u} \Big)dp(u)\  a.e. \ x, y\in [0,1].
\end{align*}
\end{thm}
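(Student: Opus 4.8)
The plan is to prove Theorem~\ref{GRR} by the dyadic chaining argument of Garsia, Rodemich and Rumsey. First I would introduce $I(x):=\int_0^1\Phi\!\big(\tfrac{f(x)-f(y)}{\phi(|x-y|)}\big)\,dy$, so that Fubini's theorem gives $\int_0^1 I(x)\,dx=B$ and in particular $I(x)<\infty$ for a.e.\ $x\in[0,1]$. Because the Stieltjes integral on the right-hand side of the assertion tends to $0$ as the spacing of the two points tends to $0$, it is enough to establish the inequality for every pair of points at which $I$ is finite; the uniform continuity this provides then yields a continuous modification of $f$ for which the bound holds identically, which is exactly the almost-everywhere statement.

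Next I would fix two such good points and, using that $\Phi$ is even and arguing by symmetry, assume $s<t$ and set $d=t-s$. Beginning from a point $t_0$ near $t$ with $I(t_0)\le 2B/d$ (extracted by a Chebyshev estimate over $(s,t)$), I would construct inductively points $t_0,t_1,t_2,\dots$ converging to a limit $t_\infty$, together with scales $d_0=d\ge d_1\ge d_2\ge\cdots$ satisfying $d_{n+1}\le d_n/2$, in such a way that each $t_{n+1}$ lies in an interval $J_n$ of length comparable to $d_{n+1}$ placed next to $t_n$ and is selected so that two averaging bounds hold \emph{simultaneously}: the value $I(t_{n+1})$ is at most twice the mean of $I$ over $J_n$, and $\Phi\!\big(\tfrac{f(t_n)-f(t_{n+1})}{\phi(|t_n-t_{n+1}|)}\big)$ is at most twice the mean over $y\in J_n$ of the integrand defining $I(t_n)$. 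Since the mean of $I$ over $J_n$ is of order $B/d_{n+1}$, while the inductive bound $I(t_n)\le 2B/d_n$ makes the mean of the corresponding integrand over $J_n$ of order $B/d_n^{\,2}$, applying $\Phi^{-1}$ and the monotonicity of $\phi$ gives increments bounded by $\phi(d_{n-1})\,\Phi^{-1}\!\big(c\,B/d_n^{\,2}\big)$ for a universal constant $c$.

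I would then sum the telescoping series $|f(t_0)-f(t_\infty)|\le\sum_{n\ge 0}|f(t_n)-f(t_{n+1})|$ and, exploiting the geometric decay of the $d_n$, dominate it by a Riemann--Stieltjes integral of $\Phi^{-1}\!\big(c'\,B/u^{2}\big)$ against $d\phi(u)$ over $(0,d)$; performing the identical construction from the endpoint $s$, with the two chains sharing the common limit $t_\infty$, and adding the two estimates (together with the passage to the continuous modification, which absorbs the terms $|f(t)-f(t_0)|$ and $|f(s)-f(s_0)|$) reproduces the right-hand side of Theorem~\ref{GRR} with the absolute constant $8$.

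The step I expect to be the main obstacle is the simultaneous selection of $t_{n+1}$: one must produce a single point of $J_n$ that is good both for the next-stage control of $I$ and for the increment out of $t_n$, which requires a Chebyshev/pigeonhole argument showing that each of the two exceptional subsets of $J_n$ has measure strictly less than $|J_n|/2$, together with a choice of the intervals $J_n$ and scales $d_n$ guaranteeing the $J_n$ are long enough for such a point to exist. The remaining points---matching the discrete sum to the Stieltjes integral with the correct constants, and the reduction to a continuous modification so that the almost-everywhere conclusion is meaningful---are routine.
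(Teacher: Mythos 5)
The paper does not actually prove Theorem~\ref{GRR}: it is quoted verbatim from \cite{GRR} and \cite{Wal} with the proof omitted, so there is no in-paper argument to compare yours against. Your outline is the standard Garsia--Rodemich--Rumsey chaining proof --- the auxiliary function $I(x)=\int_0^1\Phi\bigl(\tfrac{f(x)-f(y)}{\phi(|x-y|)}\bigr)dy$, the Chebyshev selection of a good starting point, the simultaneous (pigeonhole) selection of each $t_{n+1}$ so that both the next value of $I$ and the current increment are controlled, the telescoping sum dominated by a Stieltjes integral against $d\phi$, and the factor $8$ from running two chains --- and you correctly identify the simultaneous selection as the technical heart. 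Incidentally, this argument yields $\Phi^{-1}(4B/u^2)$ in the integrand; the $B/u$ in the paper's appendix is a typo (the application in Lemma~\ref{lem-2.2} uses $B/u^2$).

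Two points in your sketch would not survive being written out as stated. First, the geometry of the chains is backwards: in the actual proof one picks a single good interior point $t_0$ and builds \emph{two} chains emanating from it, one converging to $s$ and one converging to $t$, so that $|f(t)-f(s)|\le|f(t)-f(t_0)|+|f(t_0)-f(s)|$ is controlled directly at the endpoints (using continuity, or the fact that $s,t$ are points where $I$ is finite). Two independently constructed chains started near $s$ and near $t$ have no reason to share a common limit $t_\infty$, and invoking the continuous modification to ``absorb'' $|f(t)-f(t_0)|$ is circular, since the modification is deduced \emph{from} the inequality. Second, the scales must be chosen so that $\phi(d_{n+1})\le\tfrac12\phi(d_n)$, i.e.\ geometric decay in the $\phi$-scale, not merely $d_{n+1}\le d_n/2$: the increment bound is $\phi(d_{n-1})\,\Phi^{-1}(cB/d_n^2)$, and to dominate the sum by $\int\Phi^{-1}(cB/u^2)\,d\phi(u)$ you need $\phi(d_{n-1})$ comparable to the mass $\phi(d_{n-1})-\phi(d_n)$ that the Stieltjes measure assigns to $(d_n,d_{n-1}]$; if $\phi$ is nearly flat there, halving lengths alone gives nothing. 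Both defects are repairable by following the original construction, but as written they are genuine gaps rather than routine details.
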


\begin{center}{\bf Acknowledgements} \end{center}
{\it The author was supported  in part by
Grant-in-Aid for
Young Scientist (B) 25800060 and Grant-in-Aid for Scientific Research (C) 24540198 from Japan Society for the Promotion of Science(JSPS).
 }

\bibliographystyle{plain}

\end{document}